\DeclareMathOperator{\ord}{ord}
\DeclareMathOperator{\bd}{bd}
\newtheorem{theorem}{Theorem}
\newtheorem{lemma}[theorem]{Lemma}
\newtheorem{proposition}[theorem]{Proposition}
\newtheorem{fact}[theorem]{Fact}
\theoremstyle{definition}
\newtheorem{note}[theorem]{Note}
\newtheorem{definition}[theorem]{Definition}
\newtheorem{question}[theorem]{Question}
\newtheorem{claim}[theorem]{Claim}
\newcommand{\R}{\mathbb{R}}
\newcommand{\N}{\mathbb{N}}
\begin{document}          

\makeatletter
\@namedef{subjclassname@2010}{
  \textup{2010} Mathematics Subject Classification 03E15, 54H05 and 54F15}
\makeatother

\title{The complexity of homeomorphism relations on some classes of compacta}

\author{
Pawe\l\ Krupski\footnote{Partially supported by the Faculty of Fundamental Problems of Technology, Wroc\l aw University of Science and Technology, grant 0401/0017/17}\\
Department of Computer Science, Faculty of Fundamental Problems of Technology \\ Wroc\l aw University of Science and Technology, Wroc\l aw, Poland \\
\\
Benjamin Vejnar\footnote{The author was supported by the grant GA\v CR 17-04197Y.
This work has been supported by Charles University Research Centre program No.UNCE/SCI/022.
}
\\ Department of Mathematical Analysis
\\ Faculty of Mathematics and Physics, Charles University, Prague, Czechia\\
}

\date{\today}
\maketitle

\centerline{\sc Dedicated to the memory of V\v{e}ra Trnkov\'a.}

\begin{abstract}

We prove that the homeomorphism relation between compact spaces can be continuously reduced to the homeomorphism equivalence relation between absolute retracts which strengthens and simplifies recent results of Chang and Gao, and Cie\'sla. It follows then that the homeomorphism relation of absolute retracts is Borel bireducible with the universal orbit equivalence relation.
We also prove that the homeomorphism relation between regular continua is classifiable by countable structures and hence it is Borel bireducible with the universal orbit equivalence relation of the permutation group on a countable set. On the other hand we prove that the homeomorphism relation between rim-finite metrizable compacta is not classifiable by countable structures.
\end{abstract}

Keywords: Borel reduction,  countable structures, universal orbit equivalence relation, absolute retract, $LC^n$-compactum, regular continuum.

Mathematics Subject Classification 03E15, 54H05 and 54F15.

\section{Introduction}
 Measuring the complexity of structures is a very general task. Usually we study the complexity in a relative way by comparing with other structures.
One possible approach for this by using embeddings of categories was elaborated by Pultr and Trnkov\'a in \cite{PultrTrnkova}.
However, in  this paper we use the notions of \emph{Borel reducibility} and \emph{Borel bireducibility} to relate the complexities of equivalence relations on standard Borel spaces or Polish spaces in order to compare the complexities of classification problems. For more details we refer to the book of Gao \cite{Gao}.

 Several equivalence relations  became milestones in the theory of Borel reductions, we will mention four of them:
\begin{itemize}[noitemsep]
\item the equality on an uncountable Polish space,
\item the $S_\infty$-universal orbit equivalence relation, 
\item the universal orbit equivalence relation, 
\item the universal analytic equivalence relation.
\end{itemize}

Let us give several \emph{examples} to make the above relations more familiar.
A classical example is a result of Gromov (see e.g. \cite[Theorem~14.2.1]{Gao}) who proved that the isometry equivalence relation of compact metric spaces is a \emph{smooth} equivalence relation, which means that it is Borel reducible to the equality of real numbers (or equivalently of an uncountable Polish space).
The isomorphism relation of countable graphs or the isomorphism relation of countable linear orders are Borel bireducible with the $S_\infty$-universal orbit equivalence.
The isometry equivalence relation of Polish metric spaces and the isometry relation of separable Banach spaces were proved by Gao and Kechris in \cite{GaoKechris} and by Melleray in \cite{Melleray} to be Borel bireducible with the universal orbit equivalence relation (see a survey paper by Motto Ros~\cite{MottoRos}). 
Ferenczi, Louveau and Rosendal proved in \cite{FerencziLouveauRosendal} that the isomorphism equivalence relation of separable Banach spaces is Borel bireducible with the universal analytic equivalence relation.

In order to capture all the structures in one space we need some sort of \emph{coding}. This can be done by considering some universal space (e.g. the Hilbert cube, the Urysohn space or the Gurari\u\i\ space) and all its subspaces with some natural Polish topology or Borel structure (e.g the hyperspace topology or the Effros Borel structure).
Sometimes there are other natural ways to encode a given structure. For example the class of Polish metric spaces can be coded by the set of all metrics on $\N$ where two metrics are defined to be equivalent if the completions of the respective spaces are isometric. Fortunately by \cite[Theorem 14.1.3]{Gao} it does not matter which coding we choose.
It is generally believed that this independence on a natural coding is common to other structures and thus the statements are usually formulated for all structures without mentioning the current coding.

In this paper we are dealing solely with the classification problem of compact metrizable spaces up to homeomorphism. Unless otherwise stated we assume that the coding of compact metrizable spaces is given by the hyperspace of the Hilbert cube. Zielinski proved in \cite{Zielinski} that the homeomorphism equivalence relation of metrizable compacta is Borel bireducible with the universal orbit equivalence relation. Soon after Chang and Gao proved in \cite{ChangGao} that a similar result is true if we restrict the relation only to continua. Recently Cie\'sla proved in \cite{Ciesla} that a similar result holds true also for locally connected continua.
We prove in Theorem~\ref{compAR} that the homeomorphism relation can be restricted to absolute retracts still having the same complexity.
By a similar idea we prove in Theorem~\ref{ndim} that at most $n$-dimensional continua are Borel reducible to at most $n$-dimensional $LC^{n-1}$-continua, for every $n\in\N$. However, the exact complexity of at most $n$-dimensional compacta remains unknown.

Camerlo, Darji and Marcone proved in \cite{CamerloDarjiMarcone}
that the homeomorphism relation of dendrites is Borel bireducible with the $S_\infty$-universal orbit equivalence relation, especially dendrites are classifiable by countable structures.
A similar result was proved by Camerlo and Gao in \cite{CamerloGao} for zero-dimensional compacta instead of dendrites.
We prove in Theorem~\ref{rimfinitecontinua}, that the homeomorphism equivalence relation of regular continua has the complexity of countable structures. On the other hand it is surprising that the homeomorphism relation of rim-finite compacta, which is a natural small class containing all dendrites and zero-dimensional compacta, is not classifiable by countable structures; see Theorem~\ref{rimfinitecompacta}.

\section{Definitions and notations}

Recall that a  \emph{standard Borel space} is a measurable space $(X, \mathcal S)$ such that there is a Polish topology $\tau$ on $X$ for which the family $\text{Borel}(X, \tau)$ of Borel subsets of $(X,\tau)$ is equal to $\mathcal S$. In order to compare the complexities of equivalence relations we use the notion of Borel reducibility.

\begin{definition}
Suppose that $X$ and $Y$ are standard Borel spaces and let $E$, $F$ be (usually analytic) equivalence relations on $X$ and $Y$ respectively. We say that $E$ is \emph{Borel reducible} to $F$, and we denote this by $E\leq_B F$, if there exists a Borel mapping $f\colon X\to Y$ such that
\[x E x' \iff f(x) F f(x'),\]
for every $x, x'\in X$.
The function $f$ is called a \emph{Borel reduction}.
We say that $E$ is \emph{Borel bireducible} with $F$, and we write $E \sim_B F$, if $E$ is Borel reducible to $F$ and $F$ is Borel reducible to $E$.
\end{definition} 
In a similar fashion we can define being \emph{continuously reducible} if we suppose that $X$ and $Y$ are Polish spaces and $f$ is continuous.

In the whole paper we denote $I=[0,1]$ and $Q=I^\N$. 
For a separable metric space $X$ we denote by $2^X$ the hyperspace of all compacta in $X$ a with the Hausdorff distance $d_H$. The hyperspace $C(X)$ stands for all continua in $X$.
Since we are dealing mainly with the \emph{homeomorphism equivalence relation} between elements of some  class  $\mathcal C$ of metric compacta, we rectrict the relation to $\mathcal C\cap 2^{Q}$. For example, we say that the class of continua is continuously reducible to the class of compacta instead of the longer expression that the homeomorphism relation of the class of all continua in the Hilbert cube is continuously reducible to the homeomorphism relation of all compacta in the Hilbert cube.

We say, that an equivalence relation $E$ defined on a standard Borel space $X$ is \emph{classifiable by countable structures} if there is a countable relation language $\mathcal L$ such that $E$ is Borel reducible to the isomorphism relation of $\mathcal L$-structures whose underlying set is $\N$.
An equivalence relation $E$ on a standard Borel space $X$ is said to be an \emph{orbit equivalence relation} if there is a Borel action of a Polish group $G$ on $X$ such that $xEx'$ if and only if  there is some $g\in G$ for which $gx=x'$.

Let $\mathcal C$ be a class of equivalence relations on standard Borel spaces. An element $E\in\mathcal C$ is called \emph{universal} for $\mathcal C$ if $F\leq_B E$ for every $F\in\mathcal C$.
It is known that for every Polish group $G$ there is an equivalence relation (denoted by $E_G$) on a standard Borel space that is universal for all orbit equivalence relations given by $G$-actions.
We are particularly interested in the \emph{universal $S_\infty$-equivalence relation} $E_{S_\infty}$.
It is known that an equivalence relation is classifiable by countable structures if and only if it is Borel reducible to $E_{S_\infty}$.
Also there exists \emph{universal orbit equivalence relation} which is denoted by $E_{G_\infty}$. We should also note that all the mentioned equivalence relations are analytic.

An \emph{absolute retract} is any retract of the Hilbert cube.
A continuum is called \emph{arc-like} if it is an inverse limit of arcs with continuous and onto bonding mappings. A \emph{dendrite} is a locally connected continuum which does not contain a simple closed curve. It worth mentioning that a nondegenerate dendrite is fully characterized as an absolute retract of dimension one.
A compact space is said to be \emph{rim-finite} if it has an open base formed by sets with finite boundaries. A rim-finite continuum is usually called a \emph{regular continuum}.  Recall here that if $X$ is a regular continuum then $X$ is hereditarily locally connected, thus a closed set $A\subseteq X$ separates $X$ between two distinct points $x,y\in X\setminus A$ if and only if $A$ cuts $X$ between $x$ and $y$ if and only if every arc from $x$ to $y$ intersects $A$. 

In the next section we are going to use properties  of $Z$-sets in the Hilbert cube or in the Menger universal continuum $M_n$ (~\cite{vanMill}  and~\cite{Bestvina} are good references). The crucial facts are: 
\begin{itemize}
\item
the $Z$-set unknotting theorem: each homeomorphism between $Z$-sets in $Q$ or in $M_n$ extends to a homeomorphism of $Q$ or $M_n$, respectively, onto itself~\cite[Theorem 5.3.7]{vanMill} and~\cite[Corollary 3.1.5]{Bestvina},
\item
$Z$-approximation theorem: if $B$ is a compactum such that $\dim B\le n$, then any continuous map $f:B\to M_n$ can be approximated by a $Z$-embedding (i.e., by an embedding such that $f(B)$ is a $Z$-set in $M_n$)~\cite[Theorem 2.3.8]{Bestvina},
\item
each closed subset of a $Z$-set is a $Z$-set,
\item
the Hilbert cube as well as the Menger universal continua contain their respective copies as $Z$-sets. In fact $Q\times\{0\}$ is a $Z$-set in $Q\times I$.

\end{itemize}

%
For the definition and basic properties of the topological dimension and continuum theory we refer the reader to \cite[Chapter 7]{Engelking} and to \cite{Nadler},~\cite{vanMill} and~\cite{Whyburn}.

\section{Main results}

We present three classification results in theorems \ref{compAR}, \ref{ndim} and \ref{rimfinitecontinua}. Also there is one non-classification result in Theorem~\ref{rimfinitecompacta}.
The following proposition is a special case of \cite[Proposition~1]{Zielinski} and it can be proved using the back and forth argument.

\begin{proposition}\label{extension}
Let $K\subseteq A, L\subseteq B$ be four nonempty compact metrizable spaces such that $A\setminus K$ and $B\setminus L$ are dense sets of isolated points in $A$ and $B$ respectively. Then every homeomorphism of $K$ onto $L$ can be extended to a homeomorphism of $A$ onto $B$.
\end{proposition}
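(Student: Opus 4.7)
The plan is to build a bijection $\phi\colon A\setminus K\to B\setminus L$ by a back-and-forth argument so that the map $H\colon A\to B$ extending $h$ by $\phi$ is continuous; since $A$ is compact and $B$ Hausdorff, $H$ will automatically be a homeomorphism. First I would note that $A\setminus K$ and $B\setminus L$ are both countably infinite: as discrete subspaces of compact metric spaces they are countable, and density together with nonemptiness of $K,L$ excludes finiteness. The driving \emph{concentration property} is that, for any enumeration $A\setminus K=\{a_k\}_{k\in\N}$, one has $\rho(a_k,K)\to 0$: indeed, for each $\varepsilon>0$ the set $\{a\in A\setminus K:\rho(a,K)\geq\varepsilon\}$ is closed in $A$ and consists of isolated points, hence is a compact discrete space and therefore finite. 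The analogous statement holds for $B\setminus L$.

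Next I would fix compatible metrics $\rho,\sigma$ on $A,B$ and nearest-point maps $\pi\colon A\setminus K\to K$, $\pi^*\colon B\setminus L\to L$ (available by compactness of $K,L$), enumerate $A\setminus K=\{a_k\}$, $B\setminus L=\{b_k\}$, and construct an increasing chain of finite partial bijections $\phi_n\colon F_n\to G_n$ as follows. At an odd step $n=2k-1$, if $a_k\notin F_{n-1}$ pick any $b\in(B\setminus L)\setminus G_{n-1}$ with $\sigma(b,h(\pi(a_k)))<2^{-n}$ and set $\phi_n(a_k)=b$; at an even step $n=2k$, if $b_k\notin G_{n-1}$ pick any $a\in(A\setminus K)\setminus F_{n-1}$ with $\rho(a,h^{-1}(\pi^*(b_k)))<2^{-n}$ and set $\phi_n(a)=b_k$. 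These choices are always possible, because density of $B\setminus L$ in $B$ forces every neighborhood of any point of $L$ to contain \emph{infinitely} many points of $B\setminus L$ (one first meets the neighborhood and then isolates the finite intersection), and symmetrically for $A\setminus K$; the finite previously used set can therefore be avoided. The union $\phi=\bigcup_n\phi_n$ is a bijection by the standard back-and-forth argument.

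Finally I would verify continuity of $H$. Continuity at points of $A\setminus K$ is automatic since they are isolated, so fix $x\in K$ and a sequence $a^{(i)}\to x$ with $a^{(i)}\in A\setminus K$, and let $n_i$ be the step at which $a^{(i)}$ entered the bijection; the $a^{(i)}$ are distinct, so $n_i\to\infty$. If $a^{(i)}$ entered in a forward step, then $\sigma(\phi(a^{(i)}),h(\pi(a^{(i)})))<2^{-n_i}\to 0$ and $\pi(a^{(i)})\to x$ (since $\rho(a^{(i)},\pi(a^{(i)}))=\rho(a^{(i)},K)\to 0$), so $h(\pi(a^{(i)}))\to h(x)$ and $\phi(a^{(i)})\to h(x)$. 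If $a^{(i)}$ entered in a backward step $n_i=2j_i$ with $\phi(a^{(i)})=b_{j_i}$, then $\rho(a^{(i)},h^{-1}(\pi^*(b_{j_i})))\to 0$ forces $\pi^*(b_{j_i})\to h(x)$; the $j_i$ are distinct so $j_i\to\infty$, and the concentration property applied to $B\setminus L$ gives $\sigma(b_{j_i},L)\to 0$, whence $b_{j_i}\to\pi^*(b_{j_i})\to h(x)$. I expect the backward case to be the main obstacle: without the concentration property, a $b_{j_i}$ chosen as a fresh range element could a priori remain bounded away from $L$ while its preimage approaches $K$, destroying continuity; concentration is exactly what closes this gap.
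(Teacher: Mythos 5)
Your argument is correct and is exactly the back-and-forth construction that the paper invokes (it gives no details, merely citing Zielinski's Proposition~1 and noting that a back-and-forth argument works); your concentration property and the $2^{-n}$-closeness bookkeeping supply precisely the continuity estimates needed. One cosmetic point: a sequence $a^{(i)}\to x\in K$ from $A\setminus K$ need not consist of distinct points, but each value can occur only finitely often (no term equals $x$), so $n_i\to\infty$ still holds and the rest of your argument goes through unchanged.
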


The following lemma is a consequence of~\cite{Curtis}; see~\cite[Theorem 2.6]{GvM} for more details. It will be extremely useful in the proof of Theorem~\ref{compAR}.

\begin{lemma}\label{homotopy}
If $X$ is a nondegenerate Peano continuum then there exists a homotopy $H\colon  2^X\times I\to 2^X$ for which
\begin{itemize}[noitemsep]
\item $H(A, 0)=A$ for every $A\in 2^X$,
\item $H(A, t)$ is finite for every $t>0$ and $A\in 2^X$.
\end{itemize}
\end{lemma}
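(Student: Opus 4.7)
The statement asserts exactly that the set $F(X)=\{A\in 2^X : A\text{ finite}\}$ is \emph{homotopy dense} in $2^X$: a homotopy starting at the identity of $2^X$ slides everything instantly into $F(X)$. My plan is to build $H$ by concatenating, on the intervals $[1/(n+1),1/n]$, short paths that connect successive finite approximations of each compactum $A$.

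First I would establish the ingredients. Since $X$ is a nondegenerate Peano continuum, it is locally arcwise connected and admits arbitrarily fine finite open covers $\mathcal{U}_n=\{U_1^n,\dots,U_{k_n}^n\}$ by connected open sets of diameter $<1/n$. Also, $2^X$ is a Peano continuum (in fact homeomorphic to the Hilbert cube by Curtis--Schori), so it is locally path connected and is an AR, and $F(X)$ is dense in $2^X$.

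Next, for each $n$ I would construct a continuous map $r_n\colon 2^X\to F(X)$ with $d_H(r_n(A),A)<1/n$ for every $A\in 2^X$. The idea is to pick base points $p_i^n\in U_i^n$ and use a partition of unity subordinate to $\mathcal{U}_n$ to parametrize how $r_n(A)$ depends on the way $A$ meets each $U_i^n$, so that $r_n(A)$ consists essentially of those $p_i^n$ with $U_i^n\cap A\neq\emptyset$ but the transitions at boundaries are handled continuously (this is the Curtis selection technique referenced in the statement).

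Finally, I would use local path-connectedness of $2^X$ to choose, continuously in $A$, a path $\gamma_A^n\colon [1/(n+1),1/n]\to 2^X$ from $r_{n+1}(A)$ to $r_n(A)$ of diameter $<2/n$ (this is possible since both endpoints are within $1/n$ of $A$, and can be made to depend continuously on $A$ via a continuous selection in the locally connected AR $2^X$, or by staying within $F(X)$ using its own density properties). Setting $H(A,t)=\gamma_A^n(t)$ on each $[1/(n+1),1/n]$ and $H(A,0)=A$ gives a well-defined map whose values lie in $F(X)$ for $t>0$; continuity at $t=0$ follows from $d_H(H(A,t),A)<2/n$ whenever $t\leq 1/n$.

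The main obstacle is the construction of the continuous finite-valued approximations $r_n$: naive definitions (taking the nerve, or picking points from a fixed $\epsilon$-net) fail to be continuous at $A$'s whose intersection with some $U_i^n$ slides in and out through the boundary. This is exactly the content of Curtis's theorem that one has to invoke; given it, the concatenation argument above packages the $r_n$ into the required homotopy.
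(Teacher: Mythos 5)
The first thing to note is that the paper does not prove this lemma at all: it is quoted from the literature, being a consequence of Curtis's theorem that the finite subsets form a boundary set in $2^X\cong Q$ \cite{Curtis}, with \cite[Theorem 2.6]{GvM} stating exactly the homotopy in question. So your proposal is an attempt to reconstruct the cited result, and the question is whether your reconstruction is complete. It is not: the two places where all the difficulty of that result sits are exactly the places you leave open. For the approximation maps $r_n\colon 2^X\to F(X)$ (writing $F(X)$ for the finite subsets), a partition of unity subordinate to $\mathcal U_n$ cannot simply be ``applied'': $2^X$ carries no linear structure in which to average, and the naive nerve/net recipes are precisely the ones you concede are discontinuous when $A$ slides across the boundary of some $U_i^n$. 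So this step is not a routine reduction; it is the substance of Curtis's construction, which you invoke rather than prove. Deferring it to \cite{Curtis} could be acceptable (the paper defers too), but then the only content you add is the concatenation step, and that step has a genuine gap.

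Concretely, the connecting paths $\gamma_A^n$ from $r_{n+1}(A)$ to $r_n(A)$ must (i) depend continuously on $A$, (ii) have small diameter, and (iii) take values in $F(X)$ for all parameter values, since the lemma requires $H(A,t)$ to be finite for \emph{every} $t>0$. Your first suggestion, a continuous selection of paths in the AR $2^X$ (equiconnectedness of $2^X$), gives (i) and can be arranged to give (ii), but its values are arbitrary compacta, so it violates (iii) and hence the second bullet of the statement. Your fallback, that the paths can be kept inside $F(X)$ ``using its own density properties,'' is not an argument: density of $F(X)$ in $2^X$ does not provide parametrized small paths inside $F(X)$, and what is actually needed there (that $F(X)$ is suitably locally equiconnected/ANR with diameter control, or directly that it is homotopy dense) is essentially the statement being proved. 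An explicit canonical finite-set path from $r_{n+1}(A)$ to $r_n(A)$ varying continuously with $A$ is also not obvious, because cardinalities jump and nearest-point or arc choices in $X$ cannot be made continuously in $A$. Until either of these is supplied, the definition of $H$ is incomplete; the paper avoids the issue by citing \cite{Curtis} and \cite[Theorem 2.6]{GvM} for precisely this homotopy.
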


Recall that if $Y\subseteq X$ and $\varepsilon>0$ we say that $X$ is $\varepsilon$-deformable into $Y$ if there exists a continuous mapping $\varphi\colon X\times [0,1]\to X$ such that $\varphi(x,0)=x$, $\varphi(x,1)\in Y$ and the diameter of $\varphi(\{x\}\times[0,1])$ is at most $\varepsilon$ for every $x\in X$.
The following proposition was proved in \cite[1.1 and 1.3]{Krasinkiewicz}.

\begin{proposition}\label{AR}
Let $X$ be a compact space such that for every $\varepsilon>0$ there exists an absolute neighborhood retract (absolute retract) $Y\subseteq X$ for which $X$ is $\varepsilon$-deformable into $Y$. Then $X$ is an absolute neighborhood retract (absolute retract, resp.).
\end{proposition}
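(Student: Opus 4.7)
Plan: I would verify a standard intrinsic characterization of ANRs, namely the Kuratowski--Dugundji theorem: a compact metrizable space is an ANR if and only if it is $\mathrm{LC}^n$ for every $n \in \N$, and it is an AR if and only if it is additionally contractible. Rather than trying to build a retraction from the Hilbert cube directly (a naive telescoping construction from a summable sequence of deformations fails because iterated applications of $\varphi_n(\cdot,1)$ drift away from the identity on $X$), the intrinsic route circumvents this obstacle.

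Fix $x_0 \in X$, $\varepsilon > 0$, and $k \ge 0$. By hypothesis, choose an ANR $Y \subseteq X$ together with a deformation $\varphi \colon X \times I \to X$ into $Y$ with $\mathrm{diam}\,\varphi(\{x\} \times I) \le \varepsilon/3$ for every $x$; write $\psi = \varphi(\cdot,1) \colon X \to Y$. Since $Y$ is $\mathrm{LC}^k$, there is $\delta' > 0$ such that every map $S^k \to Y \cap B_{\delta'}(\psi(x_0))$ extends to a map $B^{k+1} \to Y \cap B_{\varepsilon/3}(\psi(x_0))$. Using uniform continuity of $\psi$, pick $\delta > 0$ with $\psi(B_\delta(x_0) \cap X) \subseteq B_{\delta'}(\psi(x_0))$.

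Given $f \colon S^k \to X$ with image in $B_\delta(x_0)$, the composition $\psi \circ f$ extends to some $\bar g \colon B^{k+1} \to Y \cap B_{\varepsilon/3}(\psi(x_0))$. I would build an extension $\bar f \colon B^{k+1} \to X$ of $f$ by gluing: view $B^{k+1}$ as $(S^k \times [0,\tfrac12]) \cup_{S^k \times \{\tfrac12\}} B^{k+1}_{1/2}$, define $\bar f(s,t) = \varphi(f(s), 2t)$ on the outer annulus (which interpolates $f$ at $t=0$ with $\psi \circ f$ at $t=\tfrac12$), and use $\bar g$ on the inner disk. The triangle inequality and the $\varepsilon/3$ bounds yield $\bar f(B^{k+1}) \subseteq B_\varepsilon(x_0)$, verifying $\mathrm{LC}^k$ at $x_0$. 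Since this works for all $k$ and $x_0$, the space $X$ is $\mathrm{LC}^\infty$ and therefore an ANR. For the AR case, the hypothesis supplies an AR $Y \subseteq X$, which is contractible; if $h \colon Y \times I \to Y$ contracts $Y$ to a point $y_0$, then the concatenation $H(x,t) = \varphi(x,2t)$ for $t \le \tfrac12$ and $H(x,t) = h(\psi(x), 2t-1)$ for $t \ge \tfrac12$ is a contraction of $X$ inside itself, and a compact contractible ANR is an AR.

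The main technical obstacle is the bookkeeping of distances in the $\mathrm{LC}^k$ step: one must simultaneously track that the cone extension $\bar g$ in $Y$ and the interpolating cylindrical strip $\varphi(f(s), 2t)$ both remain within the prescribed $\varepsilon$-ball around $x_0$. Using the \emph{full} homotopy $\varphi$, rather than merely its endpoint $\psi$, is essential: it supplies a continuous, $X$-valued collar that glues the given map $f$ on $S^k$ to $\bar g$ on the interior disk.
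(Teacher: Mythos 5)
The geometric core of your construction --- using the full homotopy $\varphi$ as a collar that glues $f$ on $S^k$ to an extension $\bar g$ produced inside $Y$ --- is sound, and it does establish that $X$ is $LC^k$ for every $k$ (and contractible in the AR case). The gap is the very last step: the characterization you invoke is false in the generality required. The Kuratowski--Dugundji theorem says that a metrizable compactum of dimension $\le n$ is an ANR if and only if it is $LC^n$; there is no dimension-free version. For infinite-dimensional compacta, being $LC^k$ for all $k$ (indeed even local contractibility) does not imply ANR: Borsuk's classical example (Fund.\ Math.\ 35 (1948)) is a locally contractible compactum which is not an absolute neighborhood retract. Notice that in your argument the modulus $\delta$ depends on $k$, through the $LC^k$ modulus of $Y$ at $\psi(x_0)$, so you obtain no control that is uniform over all dimensions --- and that uniformity is exactly what separates ANRs from $LC^\infty$ compacta. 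Nor can finite-dimensionality be added as a hypothesis: the proposition is applied in the paper to $\varphi(K)$, which contains a Hilbert cube, so the statement must cover infinite-dimensional compacta.

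For comparison, the paper does not reprove the proposition but cites Krasinkiewicz \cite{Krasinkiewicz}, and, as remarked right after the statement, that proof runs through the Lefschetz characterization of ANRs: $X$ is an ANR iff for every $\varepsilon>0$ there is $\delta>0$ such that every partial realization in $X$ of an \emph{arbitrary} finite polyhedron with mesh $<\delta$ extends to a full realization with mesh $<\varepsilon$. The whole point of that characterization is that a single $\delta$ works for polyhedra of all dimensions, and the $\varepsilon$-deformations into the ANRs $Y\subseteq X$ transfer $Y$'s Lefschetz data to $X$ with this uniformity intact. Your collar-and-glue mechanism is essentially the right transfer device; if you run it against the Lefschetz property (or against its restriction to polyhedra of dimension $\le n$, which is how the paper obtains the analogous statement for $LC^{n-1}$-compacta) rather than sphere-by-sphere against $LC^k$, the proof goes through. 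As written, however, the argument only proves that $X$ is $LC^\infty$ and contractible, which is strictly weaker than the stated conclusion.
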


The proof of Proposition~\ref{AR} is based on the Lefschetz characterization of absolute neighborhood retracts via extending partial realizations of finite polyhedra to their full realizations. By restricting polyhedra of arbitrary dimensions to polyhedra of dimension $\le n$, one gets a similar characterization of $LC^{n-1}$-compacta (see~\cite[Proposition 4.2.29]{vanMill}) and the proof works for $LC^{n-1}$ compacta. So we get the following  analogue of Proposition~\ref{AR}.

\begin{proposition}\label{LC}
Let $X$ be a compact space such that for every $\varepsilon>0$ there exists an $LC^{n-1}$-compactum $Y\subseteq X$ for which $X$ is $\varepsilon$-deformable into $Y$. Then $X$ is an $LC^{n-1}$-compactum.
\end{proposition}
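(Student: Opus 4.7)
The plan is to mimic the proof of Proposition~\ref{AR} from \cite{Krasinkiewicz}, replacing the Lefschetz characterization of absolute neighborhood retracts with its $LC^{n-1}$ analogue exactly as the paragraph preceding the statement suggests. Recall that by \cite[Proposition~4.2.29]{vanMill} a compact metric space $Z$ is an $LC^{n-1}$-compactum if and only if for every $\varepsilon>0$ there is $\delta>0$ such that every partial realization of mesh $<\delta$ of a finite polyhedron $P$ with $\dim P\le n$ in $Z$ extends to a full realization of mesh $<\varepsilon$. This is the only modification of Krasinkiewicz's argument; restricting to polyhedra of dimension $\le n$ is compatible with every step.

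First I would fix $\varepsilon>0$ and, using the hypothesis, choose an $LC^{n-1}$-compactum $Y\subseteq X$ together with a deformation $\varphi\colon X\times I\to X$ satisfying $\varphi(x,0)=x$, $\varphi(x,1)\in Y$, and $\mathrm{diam}\,\varphi(\{x\}\times I)<\varepsilon/3$ for every $x\in X$. Applying the $LC^{n-1}$ characterization of $Y$, I would pick $\delta_1>0$ such that any partial realization of mesh $<\delta_1$ of a polyhedron $P$ with $\dim P\le n$ into $Y$ extends to a full realization in $Y$ of mesh $<\varepsilon/3$. Using uniform continuity of $\varphi(\cdot,1)$, shrink $\delta_1$ to some $\delta>0$ so that $d(x,x')<\delta$ implies $d(\varphi(x,1),\varphi(x',1))<\delta_1$.

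Given a partial realization $f_0\colon P^{(0)}\to X$ of an at most $n$-dimensional finite polyhedron $P$ with mesh $<\delta$, I would define $f_1\colon P^{(0)}\to Y$ by $f_1(v)=\varphi(f_0(v),1)$; this is a partial realization in $Y$ of mesh $<\delta_1$, and by the choice of $\delta_1$ it extends to a full realization $g\colon P\to Y$ of mesh $<\varepsilon/3$. To produce the required full realization of $P$ in $X$ extending $f_0$, I would glue $g$ back to $f_0$ using the tracks $t\mapsto\varphi(f_0(v),t)$ at the vertices. Concretely, on each simplex $\sigma$ of $P$ one can pick a subdivision of the mapping cylinder $\sigma\times I$ (or use the join with the vertices) and define the map so that it agrees with $f_0$ on $\sigma\times\{0\}\cap P^{(0)}$ and with $g$ on $\sigma\times\{1\}$, interpolating by means of $\varphi$ along the vertex tracks. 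The triangle inequality with the three diameter bounds $\varepsilon/3$ (deformation, mesh of $g$, deformation back) then yields a full realization of mesh $<\varepsilon$ in $X$. This verifies the Lefschetz criterion for $X$ to be an $LC^{n-1}$-compactum.

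The main point that requires care is the last gluing: one must check that the combined map is continuous across the faces of the simplices and that it actually extends $f_0$ on $P^{(0)}$. This is exactly the step carried out in \cite{Krasinkiewicz}, and since the construction proceeds simplex by simplex and uses nothing beyond the dimension of the chosen polyhedron, the restriction to $\dim P\le n$ goes through word for word. No additional ideas beyond Krasinkiewicz's argument combined with the $LC^{n-1}$ form of the Lefschetz characterization are needed.
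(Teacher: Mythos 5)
Your proposal takes essentially the same route as the paper, which itself only observes that Krasinkiewicz's proof of Proposition~\ref{AR} goes through once the Lefschetz characterization of ANRs is replaced by its $LC^{n-1}$ analogue for polyhedra of dimension $\le n$ (\cite[Proposition 4.2.29]{vanMill}); your deformation--push into $Y$--extend in $Y$--glue back along the tracks argument is exactly that adaptation, correctly carried out. One small point of precision: in the Lefschetz-type criterion a partial realization is defined on an arbitrary subcomplex $L\subseteq P$ containing all vertices, not only on $P^{(0)}$, but your gluing via the homotopy $\varphi(f_0(\cdot),t)$ works verbatim over such an $L$, so this does not affect the argument.
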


\begin{theorem}\label{compAR}
Compacta are continuously bireducible with absolute retracts.
\end{theorem}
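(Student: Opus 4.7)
The easier direction is immediate: absolute retracts form a subfamily of $2^Q$, and the identity map restricted to this subfamily is already a continuous reduction of the homeomorphism relation on absolute retracts to the homeomorphism relation on compacta.

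For the nontrivial direction I aim to build a continuous map $f \colon 2^Q \to 2^Q$ such that each $f(K)$ is an absolute retract and $K \cong K' \iff f(K) \cong f(K')$. The idea is to take $f(K)$ to be a copy of the Hilbert cube with a null sequence of short free arcs attached, whose basepoints form denser and denser approximations of $K$. Working in the product $Q \times Q$ with the first factor identified with $Q \times \{0\}$, I would use the continuous selection afforded by Lemma~\ref{homotopy} for $X = Q$ to set $F_n(K) = H(K, 1/n)$, a finite subset of $Q$ depending continuously on $K$ and converging to $K$ in Hausdorff distance. Picking, in the second factor, short segments $\sigma_n$ of length $1/n$ along pairwise distinct coordinate directions, I would attach at each $k \in F_n(K)$ the arc $\{k\} \times \sigma_n$ and set
\[
f(K) = (Q \times \{0\}) \cup \bigcup_{n \in \N} \bigcup_{k \in F_n(K)} \{k\} \times \sigma_n.
\]
The orthogonality of the directions prevents arcs at different levels from merging when the $F_n$'s happen to share points.

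To see that $f(K)$ is an absolute retract I invoke Proposition~\ref{AR}: for any $\varepsilon > 0$, the subspace $Y_\varepsilon$ consisting of $Q \times \{0\}$ together with the finitely many arcs of length greater than $\varepsilon$ is a finite union of ARs glued at single points, hence an AR, and collapsing every remaining small arc onto its base point yields the required $\varepsilon$-deformation of $f(K)$ onto $Y_\varepsilon$. To recover $K$ from $f(K)$ I would characterize the arc tips as exactly the \emph{end-points} of $f(K)$ (points having arbitrarily small neighborhoods with one-point boundary) and identify $K$ with the accumulation set of these end-points in $f(K)$; this gives $f(K) \cong f(K') \Rightarrow K \cong K'$. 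For the converse, given $h \colon K \to K'$, I would extend $h$ to a self-homeomorphism $\bar h$ of the ambient $Q \times Q$ by the $Z$-set unknotting theorem (both $K \times \{0\}$ and $K' \times \{0\}$ are $Z$-sets in $Q \times Q$) and then apply Proposition~\ref{extension} to the countable skeleton $K \cup \bigcup_n F_n(K) \cup \{\text{arc tips}\}$ to match the two systems of decorations, finally extending arc by arc.

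The main obstacle I expect is in this last matching step: the extension $\bar h$ furnished by $Z$-set unknotting need not carry $F_n(K)$ bijectively onto $F_n(K')$, so there is a combinatorial mismatch between the two decoration systems. Proposition~\ref{extension} is precisely the tool designed to absorb this ambiguity, since the arc tips form a dense set of isolated points in their own closure whose limit set is $K$. Additional care is required to ensure that the ambient self-homeomorphism can be chosen so as to preserve the distinguished copy of $Q \times \{0\}$ and hence the embedded copy of $K$, so that the reduction really produces a genuine homeomorphism between the constructed spaces. The three ingredients collected just before the theorem, namely Lemma~\ref{homotopy}, Proposition~\ref{AR}, and Proposition~\ref{extension}, provide exactly the input needed to handle continuity of $f$, AR-ness of $f(K)$, and the reshuffling of decorations, respectively.
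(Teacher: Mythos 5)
Your easy direction, the continuity of $f$, the AR argument via Proposition~\ref{AR}, and the recovery of $K$ from $f(K)$ (tips are topologically distinguished, their accumulation set is $K\times\{0\}$) are all fine and close in spirit to the paper. The genuine gap is in the forward direction, and it comes from attaching the arcs at the points $(k,0)$, $k\in F_n(K)$, inside the \emph{same} copy of $Q$ that contains $K$. First, the attachment pattern is then not an invariant of the homeomorphism type of $K$: the finite sets $H(K,1/n)$ may meet $K$ or meet each other, so $f(K)$ can have several arc-germs at one base point, or arcs based at points of $K\times\{0\}$, while $f(K')$ for a homeomorphic $K'$ has none of these features; in that case no homeomorphism $f(K)\to f(K')$ exists at all, so the implication $K\cong K'\Rightarrow f(K)\cong f(K')$ can simply fail. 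Proposition~\ref{extension} cannot ``absorb'' this, because it only yields a homeomorphism between the countable skeletons, not between the constructed spaces. Second, even when the skeletons match up, to obtain a homeomorphism $f(K)\to f(K')$ you must extend the skeleton homeomorphism over the cube part $Q\times\{0\}$, and for that you would need the skeleton (which contains $K\times\{0\}$) to be a $Z$-set in $Q\times\{0\}$; an arbitrary compactum $K\subseteq Q$ need not be a $Z$-set in $Q$ (take $K=Q$). The unknotting you invoke takes place in the ambient $Q\times Q$ and produces a self-homeomorphism of $Q\times Q$ that need not preserve $f(K)$, its cube part, or the arcs, so it does not help here.

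The paper's construction is engineered precisely to avoid both problems, by adding interval coordinates: the $n$-th finite set is placed at height $2^{-n}$ in a new coordinate, so the attachment points are automatically pairwise distinct across levels, disjoint from the copy of $K$ at height $0$, and form exactly the isolated points of a skeleton $\psi(K)$ that lies in a Hilbert cube $Q_1$ which is a $Z$-set in the larger cube $Q_2$, while the arcs stick out into yet another fresh coordinate. The order of steps is also the reverse of yours: first extend $h$ to $h_1\colon\psi(K)\to\psi(L)$ by Proposition~\ref{extension} (this automatically matches attachment points, since they are the isolated points), then apply $Z$-set unknotting inside $Q_2$ to extend $h_1$ over the cube part, and only then extend linearly over the attached arcs. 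If you modify your construction by inserting such a height coordinate (so the base points record the level and are pushed off $K$) and reorganize the argument in this order, your outline becomes the paper's proof.
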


\begin{proof}
Let us denote
\begin{align*}
Q_0 &=Q\times\{(0,0,0)\}, \\
Q_1 &=Q\times I\times\{(0,0)\}, \\
Q_2 &=Q\times I\times I\times\{0\}, \\
Q_3 &=Q\times I\times I\times I.
\end{align*}
It follows that $Q_i$ are Hilbert cubes such that $Q_{i}$ is a $Z$-set in $Q_{i+1}$ for $i=0,1,2$. We construct a reduction $\varphi\colon 2^Q\to 2^{Q_3}$.
Suppose that $H\colon 2^Q\times I\to 2^Q$ is given by Lemma~\ref{homotopy}
and let \[\psi(K)=\left(K\times \{(0,0,0)\}\right)\cup\bigcup_{n=0}^\infty H(K, 2^{-n})\times\{(2^{-n},0,0)\}.\]
Hence $\psi(K)$ is a compact subspace of $Q_1$ containing a homeomorphic copy of $K$ and a countable set of isolated points which is open and dense in $\psi(K)$. Denote
\[\varphi(K)=Q_2\cup \bigcup_{n=0}^\infty H(K, 2^{-n})\times\{(2^{-n}, 0)\}\times [0,2^{-n}].\]
Roughly speaking, $\varphi(K)$ consists of the Hilbert cube $Q_2$ to which a null sequence of mutually disjoint segments is attached. Therefore, by Proposition~\ref{AR}, $\varphi(K)$ is an absolute retract. Continuity of the mapping $\varphi$ is straightforward so it only remains to prove that $\varphi$ is a reduction.

Suppose first that $K, L\in 2^Q$ and $h\colon K\to L$ is a homeomorphism. By Proposition~\ref{extension} there is a homeomorphism $h_1\colon \psi(K)\to \psi(L)$ extending the homeomorphism $h\times\{(0,0,0)\}$.
Since $Q_1$ is a $Z$-set in $Q_2$, there is a homeomorphism $h_2\colon Q_2\to Q_2$ which extends $h_1$.
Now, extend $h_2$ to $h_3\colon \varphi(K)\to\varphi(L)$ on the remaining arcs linearly.
Hence $\varphi(K)$ is homeomorphic to $\varphi(L)$.

On the other hand suppose that $\varphi(K)$ is homeomorphic to $\varphi(L)$ by some homeomorphism $g$.
Since $g$ as well as $g^{-1}$ needs to map points of order two (local cut points) to points of order two (local cut points, resp.) it follows that $g$ is sending the following countable set
\[\bigcup_{n=0}^\infty H(K, 2^{-n})\times\{(2^{-n}, 0,0)\}\]
onto the set
\[\bigcup_{n=0}^\infty H(L, 2^{-n})\times\{(2^{-n}, 0,0)\},\]
because these sets are formed exactly by local cut points of $\varphi(K)$ and $\varphi(L)$, respectively,  which are not of order two.
Since $K\times\{(0,0,0)\}$ forms the set of cluster points of the first set and $L\times\{(0,0,0)\}$ forms the set of cluster points of the second one, it means that  $g(K\times\{(0,0,0)\})=L\times\{(0,0,0)\}$. Hence $K$ and $L$ are homeomorphic.
\end{proof} 

An $n$-dimensional  analogue of the Hilbert cube is the universal $n$-dimensional  Menger continuum $M_n$ which is characterized as an $n$-dimensional $LC^{n-1}$, $C^{n-1}$ continuum that satisfies the disjoint $n$-cells property~\cite{Bestvina}. Also,  $LC^{n-1}$ compacta are natural analogues of compact absolute neighborhood retracts  among  compacta of dimension $\leq n$ and $LC^{n-1}$, $C^{n-1}$-continua of such dimensions correspond to absolute retracts. Therefore a question arises whether the class $\mathcal C_n$ of all compacta of dimension $\leq n$  is continuously bireducible with the class $\mathcal C_n\cap LC^{n-1}\cap C^{n-1}$ of at most $n$-dimensional $LC^{n-1}$, $C^{n-1}$ continua. Notice that $\mathcal C_n$ is a $G_{\delta}$-subset of $2^Q$ and, by a Kuratowski's result\cite[Th\'{e}or\`{e}me A]{Kuratowski2}, the family  $LC^{n-1}$  is a one-to-one continuous image of a complete space, so it is Borel. Unfortunately, it is unknown if  the family   $\mathcal C_n\cap LC^{n-1}\cap C^{n-1}$ is Borel for any $n>1$ (for $n=1$ it is known to be $F_{\sigma\delta}$). We are able to show the Borel bireducibility of $\mathcal C_n$ to  $\mathcal C_n\cap LC^{n-1}$ continua. The strategy is first to Borel reduce $\mathcal C_n$ to $2^{M_n}$ (here, we do not know if this can be done continuously) and next, to proceed similarly as in the proof of Theorem~\ref{compAR}.

\begin{lemma}\label{Borel}
At most $n$-dimensional compacta in the Hilbert cube are Borel reducible to compacta in the universal $n$-dimensional Menger continuum for each $n\in\mathbb N$.
\end{lemma}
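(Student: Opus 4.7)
The plan is to fix once and for all an embedding $M_n \hookrightarrow Q$ (so that $2^{M_n}$ becomes a Borel subspace of $2^Q$) and to construct a Borel map $\varphi\colon \mathcal{C}_n \cap 2^Q \to 2^{M_n}$ with the property that $\varphi(K)$ is homeomorphic to $K$ for every $K$. Any such $\varphi$ is automatically a Borel reduction from homeomorphism on $\mathcal{C}_n$ to homeomorphism on $2^{M_n}$, since $\varphi(K) \cong \varphi(L)$ is then equivalent to $K \cong L$.

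The existence of an embedding $K \hookrightarrow M_n$ for each individual $K \in \mathcal{C}_n$ is the $Z$-approximation theorem quoted in Section~2, which in fact produces a dense $G_\delta$ set of $Z$-embeddings inside $C(K, M_n)$. To make the choice uniform in $K$, I would work in the Polish hyperspace $2^{Q \times M_n}$ and isolate the Borel subset $\mathcal{G}$ of those compacta $G \subseteq Q \times M_n$ for which both projections $\pi_Q|_G$ and $\pi_{M_n}|_G$ are injective. Injectivity of each projection is a $G_\delta$ condition on $G$, because its negation asserts the existence of two points of $G$ having equal projection and mutual distance bounded away from $0$, which is a countable union of closed conditions in the Hausdorff topology. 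The continuous projection $\rho\colon \mathcal{G} \to 2^Q$ given by $G \mapsto \pi_Q(G)$ then surjects onto $\mathcal{C}_n \cap 2^Q$ by $Z$-approximation, and each fibre $\rho^{-1}(K)$ is canonically identified with the dense $G_\delta$ set of embeddings inside $C(K, M_n)$.

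The main obstacle will be extracting a Borel section $s\colon \mathcal{C}_n \cap 2^Q \to \mathcal{G}$ of $\rho$. No continuous section is to be expected, which is precisely why the lemma asserts only Borel reducibility, in contrast with Theorem~\ref{compAR}; and since the fibres are not in general $K_\sigma$, the Arsenin-Kunugui theorem is not directly available. However, the fibres are dense $G_\delta$, hence non-meager, in natural Polish slices of $2^{Q \times M_n}$, so a Borel uniformisation in the spirit of Kuratowski-Ryll-Nardzewski, or a direct Baire-category selection that picks the first basic open set from a countable base of $2^{Q \times M_n}$ whose non-empty intersection with the fibre has a prescribed shrinking behaviour, furnishes such an $s$. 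Once $s$ is in hand, setting $\varphi(K) := \pi_{M_n}(s(K)) \in 2^{M_n}$ defines the required Borel reduction, since $s(K)$ is by construction the graph of an embedding of $K$ into $M_n$ and hence $\varphi(K) \cong K$.
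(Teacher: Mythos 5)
Your set-up is in substance the same as the paper's: your space $\mathcal{G}$ of compacta $G\subseteq Q\times M_n$ with both projections injective is a concrete model of Kuratowski's Polish space $H_p(Q,M_n)$ of partial embeddings that the paper uses, the fibre $\rho^{-1}(K)$ is a nonempty (by universality/$Z$-approximation) closed subset of $\mathcal{G}$, and the plan is to select Borel-measurably from the fibres and project to $M_n$. The genuine gap is the hypothesis of every selection theorem you invoke: you never establish Borel measurability of the multifunction $K\mapsto\rho^{-1}(K)$, i.e.\ that $\{K\in\mathcal{C}_n\colon \rho^{-1}(K)\cap U\neq\emptyset\}$ is Borel for (basic) open $U\subseteq\mathcal{G}$. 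A priori this set is only analytic, being a projection of a Borel set, so Kuratowski--Ryll-Nardzewski does not apply, and your ``first basic open set whose intersection with the fibre is nonempty'' scheme needs exactly this Borelness to define a Borel map. This is precisely the heart of the paper's proof: the Claim that the multifunction is lower semicontinuous, proved by a non-obvious argument --- given a $Z$-embedding $f$ of $K$ with graph in $U$ and $K_i\to K$, extend $f$ over $K\cup\bigcup_i K_i$ using that $M_n$ is an absolute extensor for at most $n$-dimensional compacta, then approximate the extension by $Z$-embeddings agreeing with $f$ on $K$; their restrictions to $K_i$ converge to $f$, hence lie in $U$ eventually. You use the $Z$-approximation theorem only for nonemptiness of fibres and density of embeddings in $C(K,M_n)$, not for how the fibres vary with $K$, and nothing in your sketch replaces this step.

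Your fallback via Baire category also fails as stated. The large-section (category) uniformization theorem requires the sections to be non-meager in a \emph{fixed} Polish space, whereas $\rho^{-1}(K)$ is nowhere dense in $2^{Q\times M_n}$: it is contained in the closed set of subsets of $K\times M_n$, which has empty interior. The fibre is comeager only in the slice naturally homeomorphic to $C(K,M_n)$, and that slice changes with $K$; to run a parametrized category selection over these varying slices you would need Borelness of sets such as $\{K\colon \rho^{-1}(K)\cap U \text{ is non-meager in the slice over } K\}$, which is again the missing measurability. Without the lower semicontinuity claim you only get an analytic graph, hence at best a universally measurable selection, which does not suffice for a Borel reduction; once that claim is supplied, your argument becomes the paper's proof.
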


\begin{proof}
Let us denote by $H_p(Q, M_n)$ the set of all partial embeddings $f\colon K\to M_n$ of compact subsets $K\subseteq Q$.  It is shown in~\cite{Kuratowski1} to be a Polish space with the topology induced by the convergence
\[f_k\to f\quad \Leftrightarrow \quad d_H(dom f_k, dom f)\to 0\quad \text{and}\quad f_k(x_k)\to f(x)\]
for each sequence $x_k\to x$, $x_k\in dom f_k$, $x\in dom f$, where $dom $ denotes the domain of the respective mapping.
Consider the mapping  $\Phi\colon \mathcal C_n\to 2^{H_p(Q, M_n)}$ given by 
\[\Phi(K)=\{f\in H_p(Q, M_n)\colon dom f=K\}.\]
\begin{claim} $\Phi$ is lower semi-continuous.
\end{claim}
Indeed, let $U$ be an open subset of $H_p(Q, M_n)$. We are going to show that the set 
$$\{K\in \mathcal C_n\colon \Phi(K)\cap U\neq\emptyset\}$$
is open in $\mathcal C_n$. Suppose it is not. Then there is an embedding $f\colon K\to M_n$ in $U$ such that a sequence $K_i\in \mathcal C_n$ exists satisfying $K_i\to K$ and $\Phi(K_i)\cap U=\emptyset$. By the $Z$-approximation theorem for $M_n$~\cite[Theorem 2.3.8]{Bestvina}, we can assume that $f$ is a $Z$-embedding. Since $K\cup\bigcup_iK_i\in \mathcal C_n$ and  $ M_n$ is an absolute extensor for the class $\mathcal C_n$, there is a continuous  extension $\overline{f}\colon K\cup\bigcup_iK_i \to M_n$ of $f$. Again, by the $Z$-approximation theorem, $\overline{f}$ can be approximated by   $Z$-embeddings $g\colon K\cup\bigcup_iK_i \to M_n$ which agree with $f$ on $K$. It follows that mappings $g_i=g|K_i$ converge to $f$ in $H_p(Q, M_n)$, so $g_i\in U$ for sufficiently large $i$. This contradicts the equality $\Phi(K_i)\cap U=\emptyset$.

\

By applying the Kuratowski and Ryll-Nardzewski selection theorem~\cite{KRN} to the mapping $\Phi$ we get a Borel selection map $\varphi\colon \mathcal C_n\to H_p(Q, M_n)$. The required reduction is the function $\mathcal C_n \ni K\mapsto \pi\varphi(K) \in 2^{M_n}$, where $\pi$ is  the projection $Q\times M_n \to M_n$.
\end{proof}

\begin{theorem}\label{ndim}
At most $n$-dimensional compacta are Borel bireducible with at most $n$-dimensional $LC^{n-1}$-continua, for every $n\in\N$.
\end{theorem}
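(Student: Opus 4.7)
The plan is to follow the strategy sketched after Lemma~\ref{Borel}: compose the Borel reduction $\mathcal{C}_n \to 2^{M_n}$ from that lemma (refined by composition with the $Z$-approximation theorem~\cite[Theorem 2.3.8]{Bestvina} so that the image is a $Z$-set in $M_n$) with a continuous reduction $2^{M_n} \to $ (at most $n$-dimensional $LC^{n-1}$-continua) modeled on the proof of Theorem~\ref{compAR} with $M_n$ playing the role of $Q$. The converse inclusion is trivial; the case $n=0$ is vacuous because the only $0$-dimensional continuum is a point, so effectively $n\ge 1$.

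For the construction, apply Lemma~\ref{homotopy} to the nondegenerate Peano continuum $M_n$ to obtain $H\colon 2^{M_n}\times I\to 2^{M_n}$, and set $F_k(K)=H(K,2^{-k})$. Define $\varphi(K)$ as a copy of $M_n$ (the ``bulk'') with an arc of length $2^{-k}$ attached at a point $q_{k,p}\in M_n$ for each $k$ and each $p\in F_k(K)$, where the $q_{k,p}$ are chosen so that the assignment $(k,p)\mapsto q_{k,p}$ is injective and the whole family clusters in Hausdorff distance onto $K$; the arcs are realized in pairwise disjoint coordinate directions of the ambient Hilbert cube. Then $\dim\varphi(K)\le n$ (the bulk is $n$-dimensional, arcs are $1$-dimensional), $\varphi(K)$ is a continuum, and by Proposition~\ref{LC} it is $LC^{n-1}$: for every $\varepsilon>0$, deform $\varphi(K)$ into the subspace obtained by retracting each arc of length less than $\varepsilon$ onto its base, which is $M_n$ with finitely many arcs attached at single points and hence $LC^{n-1}$.

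That $\varphi$ is a reduction follows as in Theorem~\ref{compAR}. A homeomorphism $g\colon\varphi(K)\to\varphi(L)$ must send bulk to bulk and the attachment points $\{q_{k,p}\}$ to their counterparts: the bulk is separated from the arc interiors by local topological invariants (local dimension $n$ vs.\ $1$ for $n\ge 2$, local order $\omega$ vs.\ $2$ for $n=1$), and attachment points are separated from bulk-interior points by the local cut-point structure (removing an attachment point locally disconnects the arc-side from the bulk-side, whereas a bulk-interior point of $M_n$ is not a local cut point). Since the derived set of $\{q_{k,p}\}$ in the bulk equals $K$ (respectively $L$) by the Hausdorff convergence $F_k(K)\to K$, one recovers $K\cong L$. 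Conversely, given $h\colon K\to L$, Proposition~\ref{extension} extends $h$ to a homeomorphism between $K\cup\{q_{k,p}\}$ and its $L$-analog (a $Z$-set in $M_n$); $Z$-set unknotting in $M_n$ (\cite[Corollary 3.1.5]{Bestvina}) extends further to a self-homeomorphism of the bulk; the arcs are then mapped by linear rescaling of length. The main technical obstacle is the concrete choice of the $q_{k,p}$: I would arrange a sequence of pairwise disjoint $Z$-embedded copies $M_n^k\subseteq M_n$ converging in Hausdorff distance to a fixed $Z$-set copy $M_n^\infty\supseteq K$, and set $q_{k,p}=\iota_k(p)$ where $\iota_k\colon M_n^\infty\to M_n^k$ is an embedding close to the inclusion, so that injectivity of $(k,p)\mapsto q_{k,p}$ and clustering onto $K$ are both automatic.
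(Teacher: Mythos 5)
Your proposal is correct and takes essentially the same route as the paper: compose the Borel reduction of Lemma~\ref{Borel} with a Theorem~\ref{compAR}-style construction inside the Menger continuum, attaching a null sequence of arcs at the finite sets $H(K,2^{-k})$ pushed off a $Z$-set copy containing $K$, then using Proposition~\ref{extension}, $Z$-set unknotting in $M_n$, Proposition~\ref{LC}, and the local cut-point argument for the converse. The only differences are cosmetic: the paper realizes the attachment points via maps $f_k\colon M'_n\to M'_n\setminus M_n$ with pairwise disjoint images (rather than your disjoint $Z$-embedded copies $\iota_k$) and attaches all segments in one extra product coordinate rather than in separate coordinate directions.
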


\begin{proof}
By Lemma~\ref{Borel}, it remains to continuously reduce the family $2^{M_n}$ to  the family of $LC^{n-1}$ continua in $Q$. Fix a pair $M_n\subseteq M'_n$ of topological copies of the universal $n$-dimensional Menger continua in $Q$ such that $M_n$ is a $Z$-set in $M'_n$ (clearly, this is possible by the $Z$-approximation theorem). Thus, for each $k\in \mathbb N$, there is a continuous mapping $f_k\colon M'_n\to M'_n\setminus M_n$ such that 
$$d_{sup}(f_k,id_{M'_n})< \frac1{k} \quad\text{and}\quad f_{k+1}(M_n)\cap f_{i}(M_n)=\emptyset \quad\text{for}\quad i<k+1.$$ Put $f_0=id_{M'_n}$.    There is also a homotopy $H\colon 2^{M_n}\to 2^{M_n}$ through finite sets by Lemma~\ref{homotopy}. Given $K\in 2^{M_n}$, let
\[\psi(K)=(K\times \{0\})\cup \bigcup_{k=0}^\infty f_k(H(K,2^{-k}))\times \{0\}\]
 and define a continuous mapping
 \[\varphi(K)=(M'_n\times  \{0\})\cup \bigcup_{k=0}^\infty f_k(H(K,2^{-k}))\times [0,2^{-k}].\]
 Hence, $\varphi(K)$ is the Menger continuum $M'_n$ with a null sequence of mutually disjoint segments attached, so it is $n$-dimensional. Moreover, observe that attaching to an $LC^{n-1}$-compactum finitely many disjoint arcs yields an $LC^{n-1}$-compactum and $\varphi(K)$ is $\varepsilon$-deformable into such a compactum for every $\varepsilon>0$. Thus, by Proposition~\ref{LC}, $\varphi(K)$ is an $LC^{n-1}$-continuum.

In order to show that $\varphi$ is a reduction, suppose $h\colon K\to L$ is a homeomorphism between $K,L\in 2^{M_n}$. Extend $\overline{h}(x,0):=(h(x),0)$, $x\in K$,  to a homeomorphism $h_1\colon \psi(K)\to \psi(L)$ by Proposition~\ref{extension}. Since  $\psi(K)$ and  $\psi(L)$ are $Z$-sets in  $M'_n \times  \{0\}$, there is a homeomorphism $h_2\colon M'_n \times  \{0\} \to M'_n \times  \{0\}$ that extends $h_1$~\cite[Corollary 3.1.5]{Bestvina}. Finally, extend $h_2$ to a homeomorphism $h_3\colon \varphi(K)\to \varphi(L)$ linearly over the attached segments.

Conversely, if $g\colon \varphi(K)\to \varphi(L)$ is a   homeomorphism then, as in the proof of Theorem~\ref{compAR}, we conclude that
\[g\left(\bigcup_{k=0}^\infty f_k(H(K,2^{-k}))\times \{0\}\right)= \bigcup_{k=0}^\infty f_k(H(L,2^{-k}))\times \{0\}\]
and, consequently, $g(K\times \{0\})=L\times \{0\}$,
hence $K$ and $L$ are homeomorphic.

\end{proof}

\begin{proposition}\label{uniformization}
Let $X$ be a standard Borel space, $Y$ be a Polish space and $A\subseteq X\times Y$ be a Borel set whose vertical sections are $\sigma$-compact. Then $\pi_X(A)$ is Borel and there exist Borel functions $f_n\colon \pi_X(A)\to Y$ whose graphs are subsets of $A$ and such that for every $x\in \pi_X(A)$ the set $\{f_n(x)\colon n\in\N\}$ is dense in the vertical section $A_x$.
\end{proposition}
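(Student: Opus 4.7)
The plan is to derive this from the classical Arsenin--Kunugui theorem on Borel uniformizations of Borel sets with $\sigma$-compact vertical sections. First, endow $X$ with a Polish topology compatible with its standard Borel structure, so that $X\times Y$ becomes Polish without changing the Borel $\sigma$-algebra. The Arsenin--Kunugui theorem then provides, for any Borel set $B\subseteq X\times Y$ with $\sigma$-compact vertical sections, both the Borelness of $\pi_X(B)$ and a Borel uniformization, i.e., a Borel function $g\colon \pi_X(B)\to Y$ whose graph is contained in $B$. This will be applied not only to $A$ itself but also to a countable family of restrictions indexed by a basis of $Y$, which is what delivers the dense selection.

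Concretely, fix a countable base $\{V_n:n\in\N\}$ of the topology of $Y$ and put $A_n=A\cap(X\times V_n)$. Each $A_n$ is Borel with $\sigma$-compact vertical sections, so Arsenin--Kunugui yields a Borel set $P_n:=\pi_X(A_n)$ together with a Borel function $g_n\colon P_n\to Y$ whose graph lies in $A_n$. Since the $V_n$ cover $Y$, we have $A=\bigcup_n A_n$ and consequently $\pi_X(A)=\bigcup_n P_n$, which is a countable union of Borel sets and hence Borel. Applying the same theorem to $A$ itself yields a Borel uniformization $f_0\colon \pi_X(A)\to Y$. Now extend each $g_n$ by setting
\[
f_n(x)=\begin{cases} g_n(x), & x\in P_n,\\ f_0(x), & x\in \pi_X(A)\setminus P_n. \end{cases}
\]
Each $f_n$ is Borel, being the pasting of two Borel functions along complementary Borel subsets of $\pi_X(A)$, and its values lie in $A_x$, so its graph is contained in $A$.

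To verify density, fix $x\in \pi_X(A)$ and an open set $U\subseteq Y$ with $U\cap A_x\neq\emptyset$. Choose a basic open $V_n$ with $\emptyset\neq V_n\cap A_x\subseteq U$; then $x\in P_n$ and $f_n(x)=g_n(x)\in V_n\cap A_x\subseteq U$. Hence $\{f_n(x):n\in\N\}$ meets every open subset of $Y$ that meets $A_x$, so it is dense in $A_x$. The only nontrivial ingredient is the Arsenin--Kunugui theorem, which already delivers both the Borelness of the projection and the existence of a Borel uniformization; the remainder is a routine basis-and-pasting argument, so I do not expect a substantive obstacle beyond a careful citation of that theorem.
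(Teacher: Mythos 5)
Your proposal is correct and follows essentially the same argument as the paper: apply Arsenin--Kunugui to $A\cap(X\times V_n)$ for a countable base $\{V_n\}$ and paste the resulting uniformizations to obtain a dense sequence of Borel selections (the paper simply takes $B_1=Y$ so that the uniformization of $A$ itself is the $n=1$ case rather than a separate $f_0$). No substantive differences.
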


\begin{proof}
Let $\{B_n\colon n\in\N\}$ be an open countable base of $Y$ with $B_1=Y$.
Notice that $A\cap (X\times B_n)$ has $\sigma$-compact vertical sections.
Hence, for every $n\in\N$ we can apply the Arsenin-Kunugui Uniformization Theorem \cite[Theorem 35.46]{Kechris} to the Borel set $A\cap (X\times B_n)\subseteq X\times Y$ to get a Borel uniformization $g_n\colon X_n\to B_n$ where $X_n=\pi_X(A\cap (X\times B_n))$ is Borel in $X$.
Now, it is enough to define
\[f_n(x)=\begin{cases}g_n(x), &x\in X_n,
\\ g_1(x), &x\in \pi_X(A)\setminus X_n. \end{cases}\]
Clearly all the functions $f_n$ are Borel and the set $\{f_n(x)\colon n\in\N\}$ is dense in $A_x$ for every $x\in X$.
\end{proof}

If $A\subseteq X$ is an arc with end points $a$ and $b$ such that $A\setminus \{a,b\}$ is an open subset of $X$ then $A\setminus \{a,b\}$ is called a\textit{ free arc} in $X$. If $A\setminus \{a,b\}$ is a free arc, then the union of all free arcs in $X$ containing $A\setminus \{a,b\}$ will be called a maximal free arc in $X$. 
 Let $A_X$ be the union of all free arcs in $X$. 
 
 Let us denote by $\mathcal R$ the set of all regular subcontinua of the Hilbert cube $Q$ and for $X\in\mathcal R$, denote by $B_X$ the set of all \emph{branch points} (i.e.,  points of order $\geq 3$) which are also \emph{local cut points} of $X$ (i.e., cut points of some open connected neighborhoods).

\begin{lemma}\label{separation}
Let $X$ be a regular continuum and let $x, y\in X$ be distinct. Then there is a finite set $H\subseteq A_X\cup B_X$ which separates $x$ from $y$. In particular,  $A_X\cup B_X$ is dense in $X$.
\end{lemma}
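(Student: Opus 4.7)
The plan is to refine a finite separator coming from rim-finiteness into one contained in $A_X\cup B_X$. By rim-finiteness combined with local connectedness of $X$, I would choose a connected open neighborhood $U$ of $x$ with $y\notin\overline U$ and $\bd U$ finite; hereditary local connectedness of $X$ then ensures that $\bd U$ separates $x$ from $y$. The task is then to modify $\bd U$ into a finite set lying in $A_X\cup B_X$ that still separates.

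Next, I classify each $p\in\bd U$ by its (finite) order. If $\ord(p)\ge 3$, a small connected open neighborhood $V\ni p$ with $|\bd V|=\ord(p)$ has $V\setminus\{p\}$ split into at least three components, so $p$ is a local cut point and hence $p\in B_X$. If $\ord(p)\le 2$ and $p\notin A_X$, I would prove that branch points of $X$ must accumulate at $p$: otherwise some connected open $W\ni p$ with $|\bd W|\le 2$ would be free of branch points of $X$, so every point of $\overline W$ would have order $\le 2$ in $X$, and hence in $\overline W$ (since order only decreases under passage to a subcontinuum). A locally connected continuum all of whose points have order $\le 2$ is either an arc or a simple closed curve; the circle case forces $W=\overline W\setminus\bd W$ to be disconnected, so $\overline W$ is an arc and $W$ is its interior, making $W$ a free arc of $X$ and placing $p\in A_X$, a contradiction.

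For each problematic $p\in\bd U$ (i.e.\ $p\notin A_X\cup B_X$, necessarily $\ord(p)\le 2$), I would pick a small connected open $W\ni p$ with $|\bd W|\le 2$ and $\overline W\subseteq X\setminus\{x,y\}$. Using local arcwise connectedness of $\overline W$ together with branch-point accumulation at $p$ localised to each of the at most two ``sides'' of $p$ inside $W$, I select branch points $r_1,r_2\in B_X$ arbitrarily close to $p$, one on each arc from $p$ to a point of $\bd W$, and let $W_p$ be the connected component of $p$ in $W\setminus\{r_1,r_2\}$. Then $\bd W_p\subseteq\{r_1,r_2\}\subseteq B_X$, and setting
\[ U':=U\cup\bigcup_{p\text{ problematic}}W_p \]
yields a connected open neighborhood of $x$ disjoint from $y$ with
\[ \bd U'\subseteq\bigl(\bd U\cap(A_X\cup B_X)\bigr)\cup\bigcup_{p}\bd W_p\subseteq A_X\cup B_X, \]
so $H:=\bd U'$ is the desired finite separator. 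The ``in particular'' density follows by applying the conclusion to $x:=p$ and to $y$ inside arbitrarily small punctured neighborhoods of $p$, producing points of $A_X\cup B_X$ arbitrarily close to $p$.

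The main obstacle is the construction of $W_p$: one has to arrange that $r_1,r_2$ jointly separate $p$ from $\bd W$ inside $W$, which requires showing that branch points accumulate at $p$ from each of the at most two ``sides'' and not merely somewhere near $p$. This is where the local two-sided arc-like structure at an order-$\le 2$ point of a regular continuum (inherited from the arc-or-simple-closed-curve dichotomy applied along both directions) has to be exploited more carefully than in the bare accumulation statement above.
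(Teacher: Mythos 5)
Your reduction breaks at the very first classification step: from $\ord_pX\ge 3$ you infer that a small connected neighborhood $V$ with $|\bd V|=\ord_pX$ has $V\setminus\{p\}$ split into at least three components, hence $p\in B_X$. This is a non sequitur (the cardinality of $\bd V$ says nothing about the components of $V\setminus\{p\}$), and the conclusion is false: in the Sierpi\'nski gasket, which is a regular continuum, the non-vertex points have order $3$ but are not local cut points (inside any small triangular neighborhood one can join any two points while avoiding $p$), so they are branch points that do \emph{not} belong to $B_X$, which there consists only of the order-$4$ vertices. Since you start from an arbitrary finite boundary $\bd U$ provided by rim-finiteness, nothing prevents $\bd U$ from containing such points, and your scheme has no mechanism to replace them: your ``problematic'' case covers only points of order $\le 2$. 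The paper avoids precisely this trap by starting from a \emph{minimal} finite set $F$ separating $x$ from $y$; minimality is what guarantees, via Whyburn's Corollary (9.42), that every point of $F$ is a local cut point, so that only the order-two points of $F$ need repairing.

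The repair of the order-$\le 2$ points is also not carried out, and it is the heart of the lemma. Accumulation of branch points at $p$ is not enough: to get $H\subseteq A_X\cup B_X$ your $r_1,r_2$ must themselves be local cut points (otherwise they are not in $B_X$), and they must jointly separate $p$ from $\bd W$, whereas choosing ``one branch point on each arc from $p$ to $\bd W$'' blocks only those two arcs and presupposes a two-sided arc-like local structure that order-two points of regular continua need not have (an outer corner of the gasket has order two, lies on no free arc, branch points accumulate at it, yet two nearby branch points picked on two arcs neither separate it from $\bd W$ nor need lie in $B_X$). You flag this yourself as ``the main obstacle,'' but it is exactly the content of the statement, so the proposal stops short of a proof. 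The paper's treatment is genuinely different at this point: for an order-two point $t$ of the minimal separator it fixes an arc $A_t$ from $x$ to $y$ through $t$ and a small connected neighborhood $U_t$, considers the set $M$ of points of $A_t\cap U_t$ separating the two points of $\bd(A_t\cap\overline{U_t})$ in $\overline{U_t}$, and shows that if no branch point in $M$ can replace $t$, then $M=A_t\cap U_t$ and $U_t$ is a free arc, so $t$ already lies in $A_X$.
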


\begin{proof}
Let $F\subseteq X$ be a minimal finite set separating $x$ and $y$. Each point of $F$ is a local cut point of $X$~\cite[Corollary (9.42)]{Whyburn}.  Also, for every $s\in F$ there is an arc from $x$ to $y$  containing $s$ (otherwise, $F$ would not be a minimal separating set).  
Let $T\subseteq F$ be the set of  points with order two.
For every $t\in T$  choose an arc $A_t$ with end points  $x$ and $y$ containing $t$ and  an open connected neighborhood $U_t$ of $t$ whose closure is disjoint from $F\setminus\{t\}$ and such that $A_t\cap U_t$ is an arc without end points. We can assume that $t$ cuts $U_t$.

Let $\bd  (A_t\cap \overline{U_t})=\{a,b\}$  and 
\[M=\{p\in U_t\cap A_t\colon \text{$p$ separates $a$ and $b$ in $ \overline{U_t}$} \}.\]
Notice that  $t\in M$ since $F$ separates $x$ and $y$. 
It is enough to prove that at least one of the following holds:
\begin{itemize}[noitemsep]
\item[(1)] there is a branch point $z\in A_t\cap U_t$ which separates $a$ and $b$ in $ \overline{U_t}$, 
\item[(2)] there is a point $z\in A_t \cap U_t$ lying on a free arc which separates $a$ and $b$ in $ \overline{U_t}$.
\end{itemize}
Indeed, in both cases point $t$ can be replaced by $z$, i.e., the set $(F\setminus \{t\})\cup \{z\}$ separates $\{x\}$ and $\{y\}$; after finitely many such replacements we get desired set $H$.

So,  suppose  condition (1) is not satisfied. We prove that condition (2) holds true.
By our assumption every point of $M$ is a point of order two.
One can easily prove that 
\begin{claim}
\begin{itemize}[noitemsep]
\item[a)] Every point in $M$ is approximated by points in $M$ from both sides with respect to a natural order of $A_t\cap U_t$. Another way of stating this is that no point of $M$ is accessible from $(A_t\cap U_t)\setminus M$.
\item[b)] The set $M$ is closed in $A_t\cap U_t$.
\end{itemize}
\end{claim}
Proof of a)
If $z\in M$ then $z$ is of order two. We consider  two-point boundaries of small neighborhoods of $z$ contained in $U_t$. The boundary points  separate $a$ from $b$ in $\overline{U_t}$ and hence they are in $M$.

Proof of b) The limit of a sequence from $M$ is either outside $U_t$ or it separates $a$ from $b$ in $\overline{U_t}$. Hence the limit must be in $M$ as well.

Since $M\neq\emptyset$,  it follows by  a) and b)  that $M=A_t\cap U_t$.
Hence every point of $U_t$ is of order two and thus $U_t$ is a free arc and $t\in U_t$ separates $a$ and $b$ in $\overline{U_t}$.
Hence,  condition (2) is proved.
\end{proof}

\begin{definition}
Suppose that $X$ and $Y$ are regular continua and let $C\subseteq X$ and $D\subseteq Y$. A bijection $f\colon C\to D$ is called \emph{separation preserving} if, for every pair $x, y\in C$ and a finite set $F\subseteq C$, 
$x$ and $y$ are separated by $F$ if and only if $f(x)$ and $f(y)$ are separated by $f(F)$.
\end{definition}

\begin{lemma}\label{extensiondense}
Let $X$ and $Y$ be regular continua and let $C$ and $D$ be dense subsets of $X$ and $Y$ respectively, for which  $B_X\subseteq C\subseteq B_X\cup A_X$ and $B_Y\subseteq D\subseteq B_Y\cup A_Y$. Then every separation preserving bijection $f\colon C\to D$ can be extended to a homeomorphism of $X$ onto $Y$.
\end{lemma}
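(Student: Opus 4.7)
The plan is as follows. First, I would strengthen Lemma~\ref{separation} to the statement that any two distinct points of $X$ (respectively $Y$) can be separated by a finite subset of $C$ (respectively $D$). Revisiting the proof of Lemma~\ref{separation}, each point of a minimal separating set is either a branch point (in $B_X \subseteq C$) or an interior point of a free arc $U_t$, and case (2) of that proof shows that any other interior point of $U_t$ serves equally well as a separator; density of $C$ supplies such a replacement inside $C$.

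Next I would define the extension pointwise: for $x \in X$, pick any sequence $c_n \to x$ in $C$ and set $\hat f(x) := \lim_n f(c_n)$. The heart of the argument is the following convergence claim. Suppose $y$ is a cluster point of $(f(c_n))$ and $z \in Y$ is distinct from $y$, where $z$ is either a second cluster point (in the case $x \notin C$) or $z := f(x)$ (in the case $x \in C$); I aim to derive a contradiction. Pick a finite $G \subseteq D$ separating $y$ from $z$ (so $y, z \notin G$) and put $F := f^{-1}(G) \subseteq C$. Then $x \notin F$: if $x \in C$ then $z = f(x) \notin G$ forces $x \notin F$, and if $x \notin C$ then $x \notin F \subseteq C$. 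Since $X$ is hereditarily locally connected and $F$ is finite, the component $W$ of $X \setminus F$ containing $x$ is open. The separation-preserving property of $f$ then implies that $f(W \cap C)$ is contained in a single component $W^*$ of $Y \setminus G$: for any $c, c' \in W \cap C$ the set $F$ does not separate $c$ from $c'$, so $G = f(F)$ does not separate $f(c)$ from $f(c')$. Since $c_n \in W$ eventually, $f(c_n) \in W^*$ eventually, hence every cluster point of $(f(c_n))$ lies in $\overline{W^*} \subseteq W^* \cup G$. In the case $x \in C$ we have $z = f(x) \in W^*$; in the case $x \notin C$, $z$ is itself a cluster point, so $z \in \overline{W^*} \subseteq W^* \cup G$. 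Either way, since $y, z \notin G$ lie in different components of $Y \setminus G$, we reach a contradiction.

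This establishes that $\hat f$ is well-defined, extends $f$, and (by interleaving two sequences converging to the same point) is independent of the approximating sequence. The continuity of $\hat f$ at a point $x$ follows from the same framework: if $x_m \to x$ but $\hat f(x_m) \to y \neq \hat f(x)$, pick $G \subseteq D$ separating $y$ from $\hat f(x)$, form $F = f^{-1}(G)$ and the open component $W \ni x$ as above. For large $m$ we have $x_m \in W$, and approximating $x_m$ from within $W \cap C$ places $\hat f(x_m) \in \overline{W^*}$, where $W^*$ is the component of $Y \setminus G$ containing $\hat f(x)$. Passing to the limit gives $y \in \overline{W^*} \subseteq W^* \cup G$; since $y \notin G$, we conclude $y \in W^*$, contradicting that $G$ separates $y$ from $\hat f(x)$.

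Finally, applying the same construction to the separation-preserving bijection $f^{-1}\colon D \to C$ yields a continuous extension $\hat g\colon Y \to X$. The compositions $\hat g \circ \hat f$ and $\hat f \circ \hat g$ coincide with the identity on the respective dense sets $C$ and $D$, so by continuity they are the identity everywhere, showing $\hat f$ is a continuous bijection between compacta and hence a homeomorphism. The principal obstacle throughout is the convergence step: it is the interplay between the separation-preserving property of $f$ and the fact that in a hereditarily locally connected space the components of the complement of a finite set are open that makes the extension work.
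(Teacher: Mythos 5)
Your proof is correct, and it gets to the extension by a route that differs in execution from the paper's, though it consumes the same raw ingredients (Lemma~\ref{separation} with the separating set pushed into $D$ along free arcs, density, local connectedness, compactness). The paper does not build the extension pointwise: it proves that $f$ is \emph{uniformly continuous}, arguing by contradiction with two sequences $x_n,y_n$ tending to a common point $z$, and then quotes the standard extension theorem for uniformly continuous maps, repeating everything for $f^{-1}$. To handle the possibility that the chosen separator's preimage contains $z$, the paper picks two \emph{disjoint} finite separating sets $H_1,H_2\subseteq D$ for the limits $a,b$ of $f(x_n),f(y_n)$, so that at least one $f^{-1}(H_i)$ misses $z$, and then plays a connected neighborhood of $z$ in $X$ against connected neighborhoods of $a,b$ in $Y$. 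Your version replaces this with the cleaner observation that $F=f^{-1}(G)\subseteq C$ automatically avoids the point in question (either the point is outside $C$, or its image was excluded from $G$ by the choice of $G$), so no second separator is needed; and you replace the uniform-continuity packaging by the component argument, using that components of $X\setminus F$ and $Y\setminus G$ are open and coincide with quasi-components, so that separation preservation confines $f(W\cap C)$ to a single component $W^*$ with $\overline{W^*}\subseteq W^*\cup G$. What each buys: the paper's argument is shorter modulo the quoted extension theorem; yours is self-contained, avoids the disjoint-separators trick, and makes the mechanism (finite separators versus open components) completely explicit. One remark: your ``strengthened'' form of Lemma~\ref{separation} (separators can be taken inside $C$, resp.\ $D$) is exactly the paper's ``small adjustment'' of the points of $H\cap A_Y$, so it carries the same mild burden of justification in both write-ups; spelling out why a nearby point of $D$ on the same free arc still separates would round off either proof.
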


\begin{proof}
Since $X$ and $Y$ are compact there is only one uniformity on $X$ and $Y$ respectively. Let us prove that $f$ is uniformly continuous.
Suppose for contradiction that this is not the case. Then there exist  $\varepsilon>0$ and two sequences $(x_n), (y_n)$ such that the distances of $x_n$ and $y_n$ converge to zero, but the distance of $f(x_n)$ and $f(y_n)$ is bigger than $\varepsilon$. 
Since $X$ and $Y$ are compact, we can assume that $f(x_n)$ converges to some $a\in X$, $f(y_n)$ converges to some $b\in Y$ and both the sequence $x_n$ and $y_n$ converge to the same point $z\in X$. 


By Lemma~\ref{separation} there is a finite set $H_1\subseteq B_Y\cup A_Y$ separating $a$ and $b$. By a small adjustment of points in  $H_1\cap A_Y$ we can suppose that $H_1\subseteq D$.
Similarly, there is $H_2$ with the same properties which is disjoint from $H_1$. This follows, e.g.,  from~\cite[Proposition 10.18]{Nadler} where the ``additive-hereditary system'' is the family of finite subsets of $B_Y\cup A_Y$ and $H_2$ is the boundary of a neighborhood of $a$  disjoint from $H_1$, slightly adjusted to be in $D$. 
There is $i$ such that $f^{-1}(H_i)$ does not contain the point $z$.
Since $X$ is locally connected, there is a connected neighborhood $U$ of $z$ which is disjoint from $f^{-1}(H_i)$. For   $n$  sufficiently large, $x_n, y_n\in U$ and hence  $x_n$ and $y_n$ are not separated by $f^{-1}(H_i)$.

Since $Y$ is locally connected there are connected neighborhoods of $a$ and $b$ disjoint from $H_i$, hence it follows that for $n$  sufficiently large, $f(x_n)$ and $f(y_n)$ are separated by $H_i$. We get a contradiction, since the function  $f$ was supposed to be separation preserving.

Since $f$ is uniformly continuous it can be extended to a continuous mapping of $X$ to $Y$
\cite[Theorem~4.3.17]{Engelking}. Since, moreover, we can do the same with $f^{-1}$, the extended  mapping  is a homeomorphism.
\end{proof}

For the puprose of the following lemma and Theorem~\ref{rimfinitecontinua} let us fix the sets
\[\mathcal A=\{(X, a)\colon X\in\mathcal R, a\in A_X\},\quad
\mathcal B=\{(X, b)\colon X\in\mathcal R, b\in B_X\}.\]

\begin{lemma}\label{borel}
The sets  $\mathcal A$ and   $\mathcal B$ are Borel subsets of $C(Q)\times Q$.
\end{lemma}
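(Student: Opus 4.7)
The plan is to give, for each of $\mathcal{A}$ and $\mathcal{B}$, a Borel characterization in terms of a fixed countable basis $\{B_n\}_{n\in\N}$ of $Q$ consisting of open balls with rational centres and radii; membership in each set will then be expressed as a countable Boolean combination of visibly Borel conditions on $(X,a)$. Throughout I use that a regular continuum is hereditarily locally connected, so components of open subsets of $X$ are themselves open in $X$. I also use that $\mathcal{R}$ itself is a Borel subset of $C(Q)$, which follows from the rim-finite definition: $X\in\mathcal{R}$ iff for every rational $\varepsilon>0$, $X$ admits a finite cover by sets $B_n\cap X$ of diameter less than $\varepsilon$ and with finite boundary in $X$, a countable Boolean combination of Borel conditions.

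For $\mathcal{A}$ the key observation is that $a\in A_X$ iff $a$ has an open neighbourhood in $X$ homeomorphic to $(0,1)$. By local connectedness this is in turn equivalent to the following combinatorial condition: there exists $n\in\N$ such that $a\in B_n$, the set $B_n\cap X$ is connected, the closure $\overline{B_n\cap X}^{X}$ is an arc, and both of its endpoints lie in $X\setminus B_n$. A short check shows that under these hypotheses $\overline{B_n\cap X}^{X}\setminus\{\text{endpoints}\}=B_n\cap X$, so the arc is free in $X$ and $a$ lies in its relative interior. Each of the four conditions is Borel in $(X,a)$: the set of arcs is Borel in $C(Q)$, the endpoint assignment on arcs is Borel, and for fixed $n$ the closure map $X\mapsto\overline{B_n\cap X}^{X}$ is Borel. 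Intersecting the resulting countable union over $n$ with $\mathcal{R}$ gives that $\mathcal{A}$ is Borel.

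For $\mathcal{B}$ I split $b\in B_X$ into the two conditions that $b$ has order at least $3$ in $X$ and that $b$ is a local cut point of $X$. Order at most $2$ at $b$ is coded Borel-measurably by the statement that for every rational $\varepsilon>0$ there is $n$ with $b\in B_n$, $\mathrm{diam}(B_n)<\varepsilon$, and $|\partial_X(B_n\cap X)|\le 2$; its negation gives the branch-point condition. The local-cut-point condition is likewise a countable disjunction: there exist $n,m_1,m_2\in\N$ with $b\in B_n$, with $B_{m_1}\cap B_{m_2}=\emptyset$, with $U\setminus\{b\}\subseteq B_{m_1}\cup B_{m_2}$, and with each $B_{m_i}$ meeting $U\setminus\{b\}$, where $U$ denotes the component of $b$ in $B_n\cap X$. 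By local connectedness each piece is Borel in $(X,b)$, so $\mathcal{B}$ is Borel.

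The main technical point that has to be pinned down is the Borelness of the auxiliary maps used inside these characterizations: the closure map $X\mapsto\overline{B_n\cap X}^{X}$ into $C(Q)\cup\{\emptyset\}$, the component assignment sending $(X,b,n)$ to the component of $b$ in $B_n\cap X$, and the cardinality thresholds $|\partial_X(B_n\cap X)|\le k$. These reduce to standard manipulations in the Vietoris topology on $C(Q)$ and $2^Q$, and are made effective by replacing topological components by combinatorial ones built from chains of basic sets, using the hereditary local connectedness of regular continua.
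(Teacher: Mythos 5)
Your reductions of $A_X$ and $B_X$ to conditions on the traces $B_n\cap X$ of a fixed countable family of balls of $Q$ are only correct in one direction, and the failing direction is fatal. The definitions of free arc, order and local cut point quantify over \emph{arbitrary} open neighbourhoods in $X$; hereditary local connectedness gives you arbitrarily small connected open neighbourhoods \emph{in} $X$, but it gives you no control whatsoever over the sets $B_n\cap X$, because the embedding of $X$ into $Q$ may be geometrically wild even when $X$ is topologically tame. A concrete counterexample kills both of your characterizations at once: let $X$ be the graph of $t\mapsto t\sin(1/t)$ on $[-1,1]$ (an arc, hence a regular continuum) and let $a$ be the origin. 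Then $a\in A_X$, but for \emph{every} ball $B$ containing $a$ the set $B\cap X$ is disconnected (near the exit scale the squared distance to the centre oscillates with amplitude comparable to $r^2$ while its monotone trend changes only by $O(r^3)$ per oscillation, so the level $r^2$ is crossed many times); hence your criterion for $\mathcal A$ never fires at $(X,a)$ even though $(X,a)\in\mathcal A$. The same crossings show that every small ball containing $a$ has $|\partial_X(B\cap X)|\ge 3$, so your ball-based coding of ``order at most $2$'' fails at $a$, and since $a$ \emph{is} a local cut point (and your local-cut-point clause is satisfiable there by two tangent balls), your criterion would wrongly place $(X,a)$ in $\mathcal B$, although $a$ has order $2$ and $B_X=\emptyset$. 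So the proposed formulas do not define $\mathcal A$ and $\mathcal B$; this is not a repairable bookkeeping issue about Borelness of the auxiliary maps (which you in any case only defer to ``standard manipulations''), but a wrong equivalence. Your disjoint-two-balls witness for a local cut point has the same defect in general: a separation of $U\setminus\{b\}$ need not be realizable by two disjoint round balls (e.g.\ when one piece winds around the other), so even that clause is not necessary.

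For contrast, the paper does not attempt any such intrinsic-to-extrinsic translation. Following the dendrite argument of Camerlo--Darji--Marcone, it shows each of $\mathcal A$, $\mathcal B$ is both analytic and coanalytic and invokes Suslin's theorem: branch points are captured analytically by the existence of three arcs in $X$ meeting only at $b$, endpoints coanalytically by a universal statement about pairs of arcs through the point, and $\mathcal A$ is then described from the order-two set $\mathcal O$ together with an $\exists k\,\forall x$ clause (for coanalyticity) and an arc-plus-basic-ball clause (for analyticity). Quantifying over arcs contained in $X$ sidesteps exactly the obstruction your proof runs into, because arcs are intrinsic to $X$ and the class of arcs is Borel in $C(Q)$. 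If you want to salvage a direct approach, you would have to quantify over such intrinsic witnesses (arcs, or components of $B_n\cap X$, whose Borel selection itself requires an argument), not over raw ball traces.
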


\begin{proof}
We will mostly follow a corresponding idea  for dendrites from the proof of~\cite[Lemma 6.4]{CamerloDarjiMarcone}. So,  in order to show that the sets are Borel, we check that they are  both analytic and coanalytic. The proof for coanalyticity of $\mathcal B$ is exactly the same for regular continua as for dendrites. To show that $\mathcal B$ is analytic we have to modify the respective formula as follows:

$(X,b)\in \mathcal B$ iff $b\in X$, $X\in \mathcal R$ and there exist arcs $A_1, A_2, A_3\subseteq X$ such that $b\in A_1\cap A_2\cap A_3$ and for every $x\in Q$ and every distinct $i,j\in \{1,2,3\}$ if $x\in A_i\cap A_j$ then $x=b$.

Before considering  $\mathcal A$, let us first check that the set $\mathcal E:=\{(X,e)\colon X \in\mathcal R, e\in E_X\}$, where $E_X$ is the set of all points $e\in X$ of order $\ord_xX=1$, is Borel in  $C(Q)\times Q$.
Indeed, its analyticity can be shown as for the set $\mathcal E^{\mathcal D}$ in~\cite[Lemma 6.4]{CamerloDarjiMarcone}. To see that it is coanalytic, one can characterize $\mathcal E$ in the following way:

$(X,e)\in\mathcal E$ iff  $e\in X$, $X\in \mathcal R$ and,   for every two arcs $A_1,A_2\in C(Q)$, if $e\in A_1\cap A_2$ and $A_1,A_2\subseteq X$ then there is $x\in Q$ such that $x\in  A_1\cap A_2$ and $x\neq e$.

It follows that the set $\mathcal O:=\{(X,x)\colon X \in\mathcal R, \ord_xX=2\}=\{(X,x)\colon X \in\mathcal R, x\in X\}\setminus (\mathcal B \cup \mathcal E)$ is Borel.

Now, we are ready to characterize $\mathcal A$:

$(X,a)\in\mathcal A$ iff $(X,a)\in\mathcal O$ and  there exists $k\in \mathbb N$ such that, for each $x\in Q$, if $d(a,x)<\frac1{k}$ and $x\in X$ then $(X,x)\in \mathcal O$.

This shows coanalyticity of $\mathcal A$. On the other hand, the following description  gives its analyticity. Let $\{B_i\colon i\in\mathbb N\}$ be an open countable base in $Q$. Then

$(X,a)\in\mathcal A$ iff  $a\in X$, $X\in \mathcal R$ and there is an arc $A \in  C(Q)$ such that $a\in A$, $A\subseteq X$ and there is $i$ such that $a\in B_i$, $B_i\cap X \subseteq A$.
\end{proof}

\begin{theorem}\label{rimfinitecontinua}
Regular continua are classifiable by countable structures.
\end{theorem}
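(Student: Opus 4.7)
The plan is to Borel-reduce the homeomorphism relation on $\mathcal R$ to the isomorphism relation of countable structures in a relational language $\mathcal L$ by coding each regular continuum $X$ by its separation structure on a Borel-chosen countable dense set $C_X \subseteq A_X \cup B_X$ with $B_X \subseteq C_X$. First I would extract $C_X$: since in a regular continuum $B_X$ is countable, Lemma~\ref{borel} together with the Lusin--Novikov uniformization theorem yields Borel functions $b_n \colon \mathcal R \to Q$ enumerating $B_X$; since $\mathcal A$ has $\sigma$-compact (in fact open) vertical sections, Proposition~\ref{uniformization} yields Borel functions $a_n \colon \mathcal R \to Q$ with $\{a_n(X):n\in\N\}$ dense in $A_X$. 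Interleaving $c_{2n}:=b_n$, $c_{2n+1}:=a_n$ gives a Borel enumeration of $C_X$.

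Let $\mathcal L$ consist of a unary predicate $P$ and a $(k{+}2)$-ary predicate $R_k$ for each $k\in\N$. Define $S_X$ on $\N$ by putting $P^{S_X}(n)$ iff $n$ is even, and
\[
R_k^{S_X}(m_1,\dots,m_k,i,j) \iff \{c_{m_1}(X),\dots,c_{m_k}(X)\} \text{ separates } c_i(X) \text{ from } c_j(X) \text{ in } X.
\]
The map $X\mapsto S_X$ is Borel, the only nontrivial point being that the separation predicate is Borel in the variables $(X,F,p,q)$; this follows because in a hereditarily locally connected continuum non-separation amounts to the existence of an arc from $p$ to $q$ disjoint from $F$ (the characterization recorded just before Lemma~\ref{separation}), and both this and its negation are easily seen to be Borel.

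For the equivalence $X\cong Y \iff S_X\cong S_Y$, the direction $(\Leftarrow)$ is immediate from Lemma~\ref{extensiondense}: an isomorphism $\sigma$ of $S_X$ with $S_Y$ preserves $P$, so the induced bijection $f(c_n(X)):= c_{\sigma(n)}(Y)$ is a separation-preserving bijection $C_X\to C_Y$ mapping $B_X$ onto $B_Y$, and therefore extends to a homeomorphism. For $(\Rightarrow)$, given a homeomorphism $h\colon X\to Y$ I would construct a separation-preserving bijection $f\colon h(C_X)\to C_Y$ \emph{within} the fixed continuum $Y$: set $f$ equal to the identity on $B_Y$, and on each maximal free arc $J$ of $Y$ take a Cantor-type order-preserving bijection of the countable dense sets $h(C_X)\cap J$ and $C_Y\cap J$. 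Composing with $h$ produces the required isomorphism of $S_X$ with $S_Y$.

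The hardest part will be the $(\Rightarrow)$ direction above: verifying that the piecewise bijection on $Y$, assembled from free-arc-wise Cantor identifications and the identity on $B_Y$, preserves separation by \emph{arbitrary} finite subsets of $h(C_X)$ and not merely those separations internal to a single free arc. This requires a careful combinatorial analysis of how finite subsets of $A_Y\cup B_Y$ can separate pairs of points, exploiting hereditary local connectedness and the rim-finite structure of $Y$ in the spirit of Lemma~\ref{separation}; the cleanest way to close the argument is to show that the constructed map extends to an autohomeomorphism of $Y$ (via Lemma~\ref{extensiondense}), whence it is automatically separation preserving.
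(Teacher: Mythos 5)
Your coding is essentially the paper's: enumerate $B_X$ via Lusin--Novikov, a dense subset of $A_X$ via Proposition~\ref{uniformization}, record separation by finite subsets of the chosen countable set as countably many relations on $\N$, and use Lemma~\ref{extensiondense} for the direction ``isomorphism of structures implies homeomorphism''. But the step you yourself single out as hardest is closed circularly: you propose to prove that the piecewise map (identity on $B_Y$, Cantor-type bijections on the free arcs) is separation preserving by extending it to an autohomeomorphism of $Y$ ``via Lemma~\ref{extensiondense}'' --- separation preservation is the \emph{hypothesis} of that lemma, so it cannot be the tool that establishes it. The correct closing move, and what the paper does, avoids the combinatorial analysis entirely: on the closure of each maximal free arc $J$ of $Y$, the two countable dense subsets $h(C_X)\cap J$ and $C_Y\cap J$ are matched by a homeomorphism of $\overline{J}$ that fixes $\overline{J}\setminus J$ (countable dense homogeneity of the arc), and these homeomorphisms patch with the identity off the free arcs into a genuine autohomeomorphism $g$ of $Y$ because in a hereditarily locally connected continuum the maximal free arcs form a null family; then $g\circ h$ is a homeomorphism of $X$ onto $Y$ carrying $C_X$ onto $C_Y$, hence automatically separation preserving. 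That nullness/continuity point is the real content of your ``hardest part''; Lemma~\ref{extensiondense} plays no role in this direction.

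Two further (fixable) gaps. To turn the point bijection $g\circ h\colon C_X\to C_Y$ into an isomorphism of the structures on $\N$ you must control multiplicities: with your enumeration a point may be listed a different number of times for $X$ than its image is for $Y$, and then no bijection of indices induces your point map; the paper arranges that every point occurs infinitely often in each enumeration, and you need the same. Also, the functions $b_n$ and $a_n$ are only defined on the Borel sets $\mathcal R_B$ and $\mathcal R_A$ (a regular continuum may have $B_X=\emptyset$ or $A_X=\emptyset$), so, as in the paper, you should split $\mathcal R$ into the corresponding Borel pieces and reduce each separately; and in the backward direction the well-definedness and injectivity of $f(c_n(X)):=c_{\sigma(n)}(Y)$ requires the observation, via Lemma~\ref{separation} after perturbing the $A$-part of the separating set into the dense set, that distinct points of $C_X$ are separated by a finite subset of $C_X$, so that the structure detects equality of points. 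With these repairs your argument coincides with the paper's proof.
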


\begin{proof}
We will assign to every regular continuum $X$ a sequence $(R^X_n)_{n=1}^\infty$ of relations, such that $R^X_n\subseteq X^{n+2}$ for which $(x, y, x_1, x_2, \dots, x_n)\in R_n^X$ iff $\{x_1,\dots, x_n\}$ separates $x$ and $y$. 
Note that the sets $A_X$ are open in $X$ and hence $\sigma$-compact, and, by \cite[p. 606]{Whyburn}, the sets $B_X$ are countable for $X\in\mathcal R$. 
Notice that $A_X\cap B_X=\emptyset$ and that $A_X\cup B_X$ is dense in $X$ by Lemma~\ref{separation}.
Let $\mathcal R_A$, $\mathcal R_B$ denote the sets of continua $X\in\mathcal R$ such that $A_X\neq\emptyset$, $B_X\neq\emptyset$, respectively.

Clearly
\[\mathcal R= (\mathcal R\setminus \mathcal R_B)\cup (\mathcal R_B\setminus R_A)\cup (\mathcal R_B \cap R_A).\]
If we prove that each of the summands is Borel and it is classifiable by countable structures, it will follow that $\mathcal R$ is classifiable by countable structures as well.
The class $\mathcal R\setminus \mathcal R_B$ contains only three topological types of  continua: 
 a point, an arc and a simple closed curve. By \cite[Theorem~1]{Ryll-Nardzewski} it follows that they are Borel. Hence the set $\mathcal R\setminus \mathcal R_B$ is easily classifiable by countable structures (this is even a \emph{smooth} equivalence relation).
The case of $\mathcal R_B\setminus \mathcal R_A$ can be handled similarly as the remaining one, so we will skip it. In the rest we prove that $\mathcal R_B \cap R_A$ is classifiable by countable structures.

The vertical sections of the set $\mathcal A$ are $\sigma$-compact and $\mathcal A$ is Borel by Lemma~\ref{borel}.
Hence, by Proposition~\ref{uniformization} 
the set $\mathcal R_A$ is Borel and there are Borel mappings $b_{2n-1}\colon \mathcal R_A\to Q$ such that $\{b_{2n-1}(X)\colon n\in\N\}$ is dense in $A_X$. We can suppose that every point of the dense set occurs infinitely many times.

The vertical sections of $\mathcal B$ are countable and $\mathcal B$ is Borel by Lemma~\ref{borel}.
It follows by the Lusin-Novikov Uniformization Theorem \cite[Theorem 18.10]{Kechris} that $\mathcal R_B$ is Borel and there are countably many Borel mappings $b_{2n}\colon \mathcal R_B\to Q$ such that $\{b_{2n}(X)\colon n\in\N\}=B_X$. We can easily assume that every point occurs infinitely many times.

We define an $(n+2)$-ary relation $S^X_n$ on $\N$. For $k, l, m_1, \dots, m_n\in\N$ let
\[S^X_n(k, l, m_1, \dots, m_n) \quad\text{ iff }\quad
R^X_n\left(b_k(X), b_l(X), b_{m_1}(X), \dots, b_{m_n}(X)\right),\]
and let
\[\Phi(X)=\left(\N, (S^X_n)_{n\in\N}\right).\]
Roughly speaking, the mappings $b_n$ allow us to code  relations $R_n^X$ by relations on  $\N$ in a Borel way.

Let us check that $\Phi$ is a Borel reduction of $\mathcal R_A\cap \mathcal R_B$ to countable structures.
Suppose that $h\colon X\to Y$ is a homeomorphism and $X, Y\in\mathcal R_A\cap \mathcal R_B$. Clearly, $h(A_X)=A_Y$ and $h(B_X)=B_Y$. For any $x\in B_X$, the sets $\{n\in\N\colon b_{2n}(X)=x\}$ and
$\{n\in\N\colon b_{2n}(Y)=h(x)\}$ are infinite. So let $\varphi_x$ be an arbitrary bijection between them. 

Let $A$ be  a maximal  free arc in $X$.  Note that $\overline{A}$ is either an arc or a simple closed curve. The set $E_X=\{b_{2n-1}(X)\colon n\in\N\}\cap A$ is countable and dense in $A$. The same is true for the set $E_Y=\{b_{2n-1}(Y)\colon n\in\N\}\cap h(A)$ in $h(A)$. Since the real line is countable dense homogeneous, there is a homeomorphism $\psi_A\colon\overline{A}\to h(\overline{A})$ such that  $\psi_A|(\overline{A}\setminus  A)=h|(\overline{A}{}\setminus  A)$ and $\psi_A(E_X)=E_Y$.  Now, for every $x\in C$,  the sets $\{n\in\N\colon b_{2n-1}(X)=x\}$  and $\{n\in\N\colon b_{2n-1}=\psi_A(x)\}$ are infinite, hence  there is a bijection $\varphi_x$  between  these sets.
Let $\varphi=\bigcup\{\varphi_x\colon x\in b_\N(X)\}$.
It is straightforward to check that $\varphi\colon\Phi(X) \to \Phi(Y)$ is an isomorphism.

On the other hand,  assume that $\varphi\colon \Phi(X)\to \Phi(Y)$ is an isomorphism.
Let us define $h(b_k(X))=b_{\varphi(k)}(Y)$, which is a correct definition (not depending on $k$ but only on $b_k(X)$). Indeed if $b_{\varphi(k)}(Y)\neq b_{\varphi(l)}(Y)$, then using Lemma~\ref{separation} we can find a finite set $\{y_1, \dots, y_n\} \subseteq b_\N(Y)$ separating $b_{\varphi(k)}(Y)$ and  $b_{\varphi(l)}(Y)$. Suppose that $y_i=b_{m_i}(Y)$. Then $S_n^Y(\varphi(k), \varphi(l), m_1, \dots, m_n)$, thus  $S_n^X(k, l, \varphi^{-1}(m_1), \dots, \varphi^{-1}(m_n))$,  since $\varphi$ is an isomorphism.  Hence the points $b_k(X)$ and $b_l(X)$ are separated  and so they are  different points.  Similarly, if   $b_k(X)\neq b_l(X)$, then  $b_{\varphi(k)}(Y)\neq b_{\varphi(l)}(Y)$.  

Let $C=b_\N(X)$ and let $D=b_\N(Y)$. Clearly,  $h\colon C\to D$ is a separation preserving bijection and hence, by
Lemma~\ref{extensiondense},  it can be extended to a homeomorphism of $X$ onto $Y$.

\end{proof}

Let $c_0=\{x\in\R^\N\colon x_n\to 0\}$. For a set $X$ of sequences of real numbers we denote by $X/c_0$ the equivalence relation for which two sequences are equivalent if and only if their difference converges to zero.

\begin{lemma}\label{turb}
The equivalence relation $I^\N/c_0$ is not classifiable by countable structures.
\end{lemma}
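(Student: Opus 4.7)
The plan is to invoke Hjorth's turbulence theorem via a continuous reduction from a standard turbulent orbit equivalence relation. Consider the Polish group $c_0$ (with the sup-norm topology) acting on the infinite-dimensional torus $T^\N$, where $T=\R/\Z$, by coordinatewise translation modulo $1$; the orbit equivalence relation is precisely $x\sim y \Longleftrightarrow d_T(x_n,y_n)\to 0$. This is a classical example of a generically turbulent Polish group action: every orbit is dense (any target $y$ can be approximated by an orbit element of $x$ after modifying only finitely many coordinates of $x$) and meager (the image of $c_0$ in $T^\N$ is a proper $F_\sigma$), and the local orbits are somewhere dense by straight-line path arguments in $c_0$; see \cite[Chapter~10]{Gao}. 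Hjorth's turbulence theorem then yields that $T^\N/c_0$ is not classifiable by countable structures.

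It remains to continuously reduce $T^\N/c_0$ to $I^\N/c_0$. I would use the standard embedding $T\hookrightarrow I^2$ given by $x\mapsto(\tfrac12+\tfrac12\cos 2\pi x,\tfrac12+\tfrac12\sin 2\pi x)$, applied coordinatewise, to obtain a continuous $F\colon T^\N\to (I^2)^\N$; after identifying $(I^2)^\N$ with $I^\N$ via any bijection $\N\to\N\times\{0,1\}$ (which preserves the $c_0$ structure), this becomes the desired continuous map into $I^\N$. To verify that $F$ is a reduction, note that $|e^{2\pi i x_n}-e^{2\pi i y_n}|=2|\sin\pi(x_n-y_n)|$ tends to $0$ if and only if $d_T(x_n,y_n)\to 0$; since this complex modulus controls both the $\cos$- and $\sin$-components of $F(x)-F(y)$, the difference $F(x)-F(y)$ lies in $c_0$ if and only if $x\sim y$ in $T^\N$. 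Thus $I^\N/c_0$ inherits non-classifiability from $T^\N/c_0$.

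The principal technical step in this plan is the verification of turbulence of the $c_0$-action on $T^\N$, and specifically the density of the local orbits; however, since this is a textbook example the proof simply quotes it from the standard references.
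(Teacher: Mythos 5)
Your proposal is correct, and it follows the same high-level strategy as the paper (exhibit a turbulent orbit equivalence relation that Borel/continuously reduces to $I^\N/c_0$, then apply Hjorth's turbulence theorem), but the ingredients differ. The paper quotes two ready-made facts: the reduction of $\R^\N/c_0$ to $I^\N/c_0$ extracted from the proof of Kanovei's Lemma~6.2.2, and turbulence of $\R^\N/c_0$ from Hjorth's Example~3.23. You instead start from the $c_0$-translation action on the torus $T^\N$ and build the reduction to $I^\N/c_0$ explicitly via the coordinatewise circle embedding; the identity $|e^{2\pi i x_n}-e^{2\pi i y_n}|=2|\sin\pi(x_n-y_n)|$ does give exactly the equivalence $F(x)-F(y)\in c_0 \iff d_T(x_n,y_n)\to 0$, and interleaving coordinates is harmless, so your reduction is sound and pleasantly avoids the subtlety that a naive homeomorphism $\R\to(0,1)$ is \emph{not} a reduction (which is why the paper must lean on Kanovei). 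The price is that you now need turbulence of $c_0\curvearrowright T^\N$ rather than of the literal textbook example $c_0\curvearrowright\R^\N$; this is true and the verification is routine (indeed easier than on $\R^\N$, since the bounded diameter of $T$ gives a uniform bound on the number of $\varepsilon$-steps needed in each coordinate, so local orbits are even dense), but you should either carry out that check or cite it more carefully, since Gao's Chapter~10 treats the $\R^\N$ case verbatim. Two small inaccuracies to fix: the image of $c_0$ in $T^\N$ is not $F_\sigma$ (it is $\Pi^0_3$; meagerness of orbits should instead be argued by covering the orbit by the closed nowhere dense sets $\{y\colon d_T(x_n,y_n)\le 1/4 \text{ for all } n\ge N\}$), and strictly speaking Hjorth's theorem is usually stated for generically turbulent actions, which is what you verify and is all that is needed.
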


\begin{proof}
By the proof of \cite[Lemma~6.2.2]{Kanovei} it follows that $\R^\N/c_0$ is Borel reducible to $I^\N/c_0$. The equivalence relation $\R^\N/c_0$ is known to be \emph{turbulent} and hence by \cite[Example~3.23]{Hjorth} it is not classifiable by countable structures.  Hence $I^\N/c_0$ is not classifiable by countable structures too.
\end{proof}

\begin{theorem}\label{rimfinitecompacta}
Rim-finite compacta are not classifiable by countable structures.
\end{theorem}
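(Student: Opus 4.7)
My plan is to construct a Borel reduction of the equivalence relation $I^{\N}/c_0$ to the homeomorphism relation on rim-finite compacta in $Q$. Since Lemma~\ref{turb} ensures $I^{\N}/c_0$ is not classifiable by countable structures and this property is preserved downward under Borel reducibility, the theorem will follow.

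For $x=(x_n)\in I^{\N}$ I will construct a rim-finite compactum $K(x)\subseteq Q$ by decorating a rigid topological ``skeleton'' $S$---for instance, a compact arc with a distinguished discrete sequence of marked points $p_n$ accumulating at an endpoint $p_\infty$---with a null sequence of pairwise disjoint, rim-finite pendant ``gadgets'' $G_n(x_n)$ attached at $p_n$ and of diameter tending to zero. Each $G_n(t)$ is drawn from a continuously parameterized family of rim-finite compacta (for instance small pendant dendrite-like structures with a zero-dimensional attachment whose Cantor-Bendixson-type invariants depend on $t$) chosen so that the homeomorphism type of $G_n(t)$ detects $t$ up to an asymptotically vanishing tolerance: small parameter perturbations produce homeomorphic gadgets, while persistent bounded-away differences are topologically detectable.

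The verification that $x\mapsto K(x)$ is a Borel reduction splits into two halves. For the forward direction, if $x-y\in c_0$ then for every $\varepsilon>0$ only finitely many indices $n$ have $|x_n-y_n|\ge\varepsilon$; at the remaining tail indices the gadgets $G_n(x_n)$ and $G_n(y_n)$ are homeomorphic by the construction of the family, while the finitely many exceptional indices are handled by a permutation on a bounded initial portion of the marked points and a skeleton rearrangement via Proposition~\ref{extension}. Glueing these pieces yields a global homeomorphism $K(x)\to K(y)$. For the converse, any homeomorphism $h\colon K(x)\to K(y)$ preserves the skeleton $S$---which is recognized intrinsically, say as the closure of the set of points of some prescribed local order, or because it is the unique topologically rigid maximal subcontinuum---and hence fixes the sequence of marked points $p_n$; this forces pointwise restrictions $h|_{G_n(x_n)}\colon G_n(x_n)\to G_n(y_n)$, and the asymptotic parameter-detection property of the family then gives $|x_n-y_n|\to 0$.

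The main technical obstacle is the design of the gadget family $\{G_n(t):n\in\N,t\in I\}$: it must be rim-finite, continuously Borel-parameterized in $t$, rigid enough that the pointwise homeomorphisms $G_n(x_n)\cong G_n(y_n)$ propagate to $x_n-y_n\to 0$, yet flexible enough that asymptotically small parameter perturbations yield homeomorphic gadgets. Calibrating these two opposing requirements so that the induced equivalence on sequences is exactly $c_0$-equivalence---rather than a strictly finer or coarser relation---is the essential technical content and reflects the turbulent nature of $I^{\N}/c_0$ underlying Lemma~\ref{turb}. A secondary concern is arranging that $S$ is topologically distinguished inside $K(x)$ so that homeomorphisms are forced to match gadgets by the natural indexing $n\mapsto n$ rather than via an arbitrary permutation, which would only yield the coarser permutation-invariant version of the $c_0$-relation.
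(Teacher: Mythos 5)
Your overall strategy (reduce $I^{\N}/c_0$, citing Lemma~\ref{turb}) matches the paper, but the heart of your construction---the gadget family---has a genuine gap, and as specified it cannot exist. You ask for compacta $G_n(t)$, $t\in I$, such that ``small parameter perturbations produce homeomorphic gadgets, while persistent bounded-away differences are topologically detectable.'' If for each $t$ there is $\delta>0$ with $G_n(s)\cong G_n(t)$ whenever $|s-t|<\delta$, then the homeomorphism classes are open in the connected space $I$, so \emph{all} $G_n(t)$ are homeomorphic and the gadget type detects nothing; your converse direction then cannot recover $|x_n-y_n|\to 0$, and in fact $K(x)\cong K(y)$ for far too many pairs. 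If instead the tolerance $\delta_n(t)$ is allowed to shrink with $n$, the forward direction breaks: $x-y\in c_0$ only gives $|x_n-y_n|\to 0$, which does not eventually beat a tolerance that itself tends to $0$, so there can be infinitely many ``exceptional'' indices and your finite-permutation repair (which, incidentally, is not what Proposition~\ref{extension} is about) does not apply. This is not a calibration detail you may defer: encoding $x_n$ into the homeomorphism \emph{type} of a pendant piece is exactly the kind of countable, discrete invariant that turbulence of $\R^{\N}/c_0$ rules out as a classifying mechanism, so no choice of family makes this scheme induce precisely the $c_0$-relation. A secondary warning: if your gadgets are connected and genuinely attached at the points $p_n$, then $K(x)$ is a regular continuum, and by Theorem~\ref{rimfinitecontinua} regular continua \emph{are} classifiable by countable structures, so such a $K$ could never reduce $I^{\N}/c_0$; any correct construction must produce rim-finite compacta that are not regular continua.

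The paper's proof uses a different, metric rather than type-theoretic, encoding, and you may want to compare. All gadgets with a fixed index are the same finite graph: a simple $(2n+2)$-od of diameter less than $2^{-2n-3}$, placed with its vertex at $(x_n,2^{-n})$, \emph{not} attached to the base arc $J=I\times\{0\}$ but accumulating on it; the arc is rigidified by attaching, at a dense set of points $q_n\in J$, simple $k_n$-ods with $k_n$ odd and pairwise distinct. Since each order $\ge 4$ occurs at most once, any homeomorphism $h\colon\Phi(x)\to\Phi(y)$ must fix each odd-order vertex, hence is the identity on $J$ by density and continuity, and must send the unique $(2n+2)$-od vertex $(x_n,2^{-n})$ to $(y_n,2^{-n})$; uniform continuity of $h$ then yields $|x_n-y_n|\to 0$. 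Conversely, when $x-y\in c_0$ one simply translates the $n$-th floating gadget horizontally by $y_n-x_n$; the map is continuous precisely because these translations tend to $0$, with no exceptional indices to repair. The parameter is thus read off from \emph{positions} relative to a rigid frame via uniform continuity, which is the idea missing from your proposal.
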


\begin{proof}
By Lemma~\ref{turb}, it is enough to prove that $I^\N/c_0$ is Borel reducible to rim-finite compacta.
Let $J=I\times \{0\}$. For $n\geq 3$ denote by $T_n\subseteq \R\times (-\infty, 0)\cup\{(0,0)\}$ a simple $n$-od with diameter less than $2^{-n-1}$ whose vertex is located at the point $(0, 0)$. For $a, b\in \R$ denote $T_n(a, b)= (a, b) + T_n$.
Let $\{q_n\colon n\in\N\}$ be a dense subset of $(0,1)$ enumerated without repetitions and let 
\[L=J\cup\bigcup T_{k_n}(q_n, 0)\]
where $k_n\in\N$ are odd,  $3 < k_1<k_2<\dots$ and $T_{k_n}(q_n, 0)$ is disjoint from $T_{k_m}(q_m, 0)$ for $m\neq n$.
For $x\in I^\N$ define a compact subset of the plane
\[\Phi(x)=L\cup\bigcup_{n\in\N} T_{2n+2}(x_n, 2^{-n}).\]

Let us verify that $\Phi$ is a reduction.
Suppose that $x, y\in I^\N$ and $x-y\in c_0$. 
There are homeomorphisms $h_n\colon T_{2n+2}(x_n, 2^{-n}) \to T_{2n+2}(y_n, 2^{-n})$, simply let $h_n(a, b)=(a-x_n+y_n, b)$.
It follows that we can define $h\colon \Phi(x)\to \Phi(y)$ in such a way that $h$ is the identity on $L$ and $h(z)=h_n(z)$ if $z\in T_{2n+2}(x_n, 2^{-n})$. One can easily check that $h$ is one-to one  continuous and thus it is a homeomorphism. 

On the other hand, suppose that $h$ is a homeomorphism of  $\Phi(x)$ and $\Phi(y)$ for $x, y\in I^\N$. In $\Phi(x)$ as well as in $\Phi(y)$,  there is at most one point of order $n$ for every $n\geq 4$. Hence $h$ needs to preserve these points, especially $h|J$ equals  the identity on a dense subset of $J$ and,  by continuity,  $h|J$ is the identity.
Moreover, we get $h((x_n, 2^{-n})=(y_n, 2^{-n})$. 
Since
\[|x_n-y_n|=\|(x_n,0)-(y_n, 0) \|\leq \|(x_n,0)-(y_n, 2^{-n})\|=\|h(x_n, 0)-h(x_n, 2^{-n})\|,\]
we get  $x-y\in c_0$, by the uniform continuity of $h$.

The reduction $\Phi$ is not only Borel, but even continuous. This easy verification is left to the reader.
\end{proof}

We should remark that in the preceding proof we used simple $n$-ods as countably many markers in order to distinguish some points. We can even prove that compacta of order at most two are not classifiable by countable structures. As in the preceding proof we find a Borel reduction of $I^\N/c_0$ to such compacta. For a sequence $(x_n)\in I^\N$ we attach to $I$ some compact countable spaces whose Cantor-Bendixson rank is some countable ordinal number instead of simple triods (we can use e.g. the ordinal $\omega^n+1$ with the order topology instead of the simple triod $T_n$). So we get a compact space $\Phi(x)$ with only one nondegenerate component all of whose points are of order at most two. Hence we get the following result.

\begin{theorem}\label{compactaordertwo}
Compacta of order at most two are not classifiable by countable structures.
\end{theorem}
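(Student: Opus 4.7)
My plan is to imitate the construction of Theorem~\ref{rimfinitecompacta}, replacing each simple $n$-od $T_n$ by a geometric copy $C_n\subseteq\mathbb{R}\times(-\infty,0]$ of the countable compact ordinal $\omega^n+1$ (with the order topology), of diameter less than $2^{-n-1}$, whose distinguished top $v_n=\omega^n$ is placed at the origin; write $C_n(a,b)=(a,b)+C_n$. As before, fix an injective enumeration $(q_n)$ of a dense subset of $(0,1)$ and odd, strictly increasing $(k_n)$ so that the pieces $C_{k_n}(q_n,0)$ are pairwise disjoint, and define
\[
\Phi(x) \;=\; J \;\cup\; \bigcup_n C_{k_n}(q_n,0) \;\cup\; \bigcup_n C_{2n+2}(x_n,2^{-n}).
\]
Two immediate observations: (i) each $C_n$ is zero-dimensional with a clopen neighborhood basis at every point, so non-top tower points have order $0$, and at a top $v_{k_n}$ attached at $(q_n,0)\in J$ a neighborhood of the form (small subarc of $J$) $\cup$ (clopen tail of the tower) has boundary precisely the two arc endpoints, so every point of $\Phi(x)$ has order at most two; (ii) each $C_n$ is zero-dimensional and meets $J$ in at most one point, so $J$ is the unique nondegenerate component.

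Continuity of $\Phi$ in the Hausdorff metric is straightforward. For the forward direction of the reduction, if $x-y\in c_0$, the map that is the identity on $J\cup\bigcup_n C_{k_n}(q_n,0)$ and horizontally translates each $C_{2n+2}(x_n,2^{-n})$ to $C_{2n+2}(y_n,2^{-n})$ is a homeomorphism of $\Phi(x)$ onto $\Phi(y)$; continuity at points of $J$ follows from $|x_n-y_n|\to 0$ and the vanishing diameters of the floating towers. For the backward direction, let $h\colon\Phi(x)\to\Phi(y)$ be a homeomorphism; since $J$ is the unique nondegenerate component, $h(J)=J$. The crucial step is to show that $h((x_n,2^{-n}))=(y_n,2^{-n})$ and $h((q_n,0))=(q_n,0)$ for every $n$. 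The former identifies the $n$-th floating tower in $\Phi(x)$ (as a clopen compact subset of a specified topological type) and matches it with the corresponding tower in $\Phi(y)$; the latter, combined with density of $\{q_n\}$ in $I$ and continuity, yields $h|J=\mathrm{id}_J$. As in Theorem~\ref{rimfinitecompacta},
\[
|x_n-y_n|\leq \|(x_n,0)-(y_n,2^{-n})\| = \|h(x_n,0)-h(x_n,2^{-n})\| \xrightarrow[n\to\infty]{}0
\]
by uniform continuity of $h$, completing the reduction.

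The main obstacle is the absence of the Menger--Urysohn order invariant that powered the proof of Theorem~\ref{rimfinitecompacta}: here every point has order at most two, so one must use Cantor--Bendixson data instead. The identification of each tower uses the dichotomy that a floating tower is clopen compact in $\Phi(x)$, while an attached tower is clopen (and non-compact) only in $\Phi(x)\setminus J$, together with the Mazurkiewicz--Sierpi\'nski classification of countable compact metrizable spaces by Cantor--Bendixson rank and top-multiplicity. The parity separation (odd $k_n$ for attached, even $2n+2$ for floating) and the strict growth of the indices are chosen to preclude accidental coincidences, notably those arising from possible subsequential accumulation of floating towers at attachment points of $J$.
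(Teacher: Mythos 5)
Your construction is exactly the one suggested in the remark preceding Theorem~\ref{compactaordertwo}, and the forward direction, the continuity of $\Phi$, and the order-at-most-two computation are fine. The genuine gap is at what you yourself call the crucial step. The claim that every homeomorphism $h\colon\Phi(x)\to\Phi(y)$ satisfies $h((x_n,2^{-n}))=(y_n,2^{-n})$ (and fixes the attachment points) is false, and the dichotomy you invoke cannot prove it. Inside any attached tower $C_{k_m}(q_m,0)$ with $k_m>2n+2$, the image $S$ of the ordinal interval $[0,\omega^{2n+2}]$ is compact, clopen in the tower, and misses the top; since the rest of the space accumulates on that tower only at its top (a point of $J$), $S$ is clopen in $\Phi(x)$, and by the Mazurkiewicz--Sierpi\'nski classification $S$ is homeomorphic to $\omega^{2n+2}+1$, i.e.\ to the $n$-th floating tower. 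Interchanging these two clopen homeomorphic sets and fixing everything else is a self-homeomorphism of $\Phi(x)$ that moves $(x_n,2^{-n})$ deep into an attached tower. So ``clopen compact of type $\omega^{2n+2}+1$'' does not single out the floating towers, and parity or growth of the indices $k_n$ is irrelevant, because every countable compactum of rank $>2n+2$ contains clopen copies of $\omega^{2n+2}+1$. The Menger--Urysohn order used in Theorem~\ref{rimfinitecompacta} is a local invariant that cannot be created or destroyed; the homeomorphism type of a countable clopen marker can.

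In fact the defect is not merely in the verification: as defined, $\Phi$ is not a reduction of $I^\N/c_0$. Since $C\sqcup(\omega^{r}+1)\cong C$ for every countable compactum $C$ of Cantor--Bendixson rank $>r$, and any such homeomorphism carries the unique point of maximal rank to itself, a floating tower can be absorbed into a nearby attached tower of larger rank, and a copy of $\omega^{2n+2}+1$ can be split off an attached tower, without moving the attachment point. Doing this simultaneously for all $n$ --- absorb the $n$-th floating tower of $\Phi(x)$ into an attached tower with $|q_{m(n)}-x_n|<2^{-n}$ and $k_{m(n)}>2n+2$, and emit the $n$-th floating tower of $\Phi(y)$ from an attached tower with $|q_{j(n)}-y_n|<2^{-n}$ and $k_{j(n)}>2n+2$, taking all these indices distinct --- yields a bijection of $\Phi(x)$ onto $\Phi(y)$ which is the identity on $J$ and moves every point by an amount tending to $0$, hence is continuous and therefore a homeomorphism of compacta. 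This works for arbitrary $x,y\in I^\N$, e.g.\ the constant sequences $x\equiv a$, $y\equiv b$ with $a\neq b$, so $\Phi(x)\cong\Phi(y)$ while $x-y\notin c_0$. Consequently the backward implication needs a genuinely different idea: either markers anchored so that such absorption/splitting is impossible, or an invariant of the pair (marker, position along $J$) that a homeomorphism must preserve asymptotically; the Cantor--Bendixson data of individual clopen pieces alone will not do.
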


\section{Auxiliary results and questions}
In this section we provide two simple ways how to prove that continua are Borel bireducible with universal orbit equivalence, once we know that compacta have this property. This simplifies the result of \cite{ChangGao} substantially. Also we prove that arc-like continua and hereditarily decomposable continua are not classifiable by countable structures.

\begin{fact}~\cite{Cook}\label{Cook}
There exists a nondegenerate continuum $C$ such that for every subcontinuum $D\subseteq C$ any continuous map $f\colon D\to C$ is either the identity or a constant map. Any continuum with these properties is called a Cook continuum. 
\end{fact}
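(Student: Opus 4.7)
The plan is to construct $C$ as a one-dimensional hereditarily indecomposable continuum realized as the inverse limit of arcs with carefully chosen bonding maps, and then to exploit the extreme rigidity of hereditarily indecomposable continua to rule out all non-trivial self-maps of subcontinua. Recall that a continuum is hereditarily indecomposable if every pair of its subcontinua is either disjoint or nested, and that such continua are obtained as inverse limits $C = \varprojlim (I, f_n)$ whose bonding maps $f_n\colon I \to I$ are $\varepsilon_n$-crooked for a sequence $\varepsilon_n \to 0$. In such continua, given any continuous map $g\colon D \to C$ from a subcontinuum $D\subseteq C$, the image $g(D)$ is again a subcontinuum and hence is comparable (under inclusion) to every other subcontinuum it meets.

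The first main step is to set up a careful enumeration. Work with a countable dense family $\{(D_k, g_k)\}_{k\in\mathbb N}$ of candidate pairs consisting of a subcontinuum $D_k$ of the inverse limit (described by a compatible sequence of subarcs of the factor arcs) and a continuous map $g_k\colon D_k \to C$ (described by a compatible sequence of piecewise-linear approximations at each coordinate), so that every continuous map from every subcontinuum of $C$ into $C$ is approximated by infinitely many $g_k$. At stage $n$ in the construction of the bonding map $f_n$, having fixed $f_1,\dots,f_{n-1}$ and hence the $n$-th factor, one chooses $f_n$ to be (i) sufficiently crooked to preserve the target degree of crookedness on the inverse limit (ensuring hereditary indecomposability), and (ii) chosen to ``kill'' the $n$-th candidate $(D_n, g_n)$ in the sense that no compatible extension of the partial approximation can converge to $g_n$ unless $g_n$ is already forced to be constant or the identity on $D_n$.

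The crucial rigidity step is to translate the inverse-limit language into a statement about individual maps: any continuous $g\colon D \to C$ lifts, up to arbitrarily small error, to a sequence of maps between the factor arcs that respect the bonding maps up to higher-order crookedness. Because each $f_n$ was arranged to be crooked enough that no non-identity, non-constant map between subarcs commutes with it in the limit, the only possible limits for $g$ are the identity on $D$ or a constant. The hereditary indecomposability of $C$ is what guarantees that $g(D)$ cannot ``spread out'' in a way that would allow mixtures of identity-like and constant-like behavior on different pieces.

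The hard part is arranging the bonding maps so that both conditions — crookedness on the one hand, and explicit exclusion of the countably many approximating candidates $(D_k,g_k)$ on the other — can be satisfied simultaneously at every stage. This is the technical heart of Cook's original argument: one needs a delicate combinatorial lemma saying that for any $\varepsilon > 0$ and any finite list of ``forbidden'' partial maps, there exists an $\varepsilon$-crooked piecewise-linear $f\colon I \to I$ avoiding all of them. Once such a lemma is in hand, the inductive construction proceeds and the resulting inverse limit $C$ is the desired Cook continuum.
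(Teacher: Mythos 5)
There is a fatal gap at the very first step. You propose to realize $C$ as an inverse limit of \emph{arcs} with $\varepsilon_n$-crooked bonding maps. Any such inverse limit is a chainable (arc-like) continuum, and the crookedness you impose to get hereditary indecomposability then forces, by Bing's uniqueness theorem, that $C$ is homeomorphic to the pseudo-arc --- no matter how cleverly the bonding maps are chosen to ``kill'' your enumerated candidates $(D_k,g_k)$. The pseudo-arc is the opposite of rigid: it is homogeneous, every nondegenerate subcontinuum of it is again a pseudo-arc, and there are many continuous maps of such subcontinua into $C$ that are neither constant nor the identity (including self-homeomorphisms and homeomorphisms onto proper subcontinua). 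So the diagonalization you describe cannot succeed; the conflict is not with any single stage of the construction but with the global fact that chainable plus hereditarily indecomposable already determines the space. Relatedly, your claim that hereditary indecomposability by itself ``guarantees that $g(D)$ cannot spread out'' is false --- the pseudo-arc is the counterexample.

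For the record, the paper does not prove this statement at all: it is quoted as a Fact with a citation to Cook's 1967 paper, and a full construction is pointed to in Pultr--Trnkov\'a. Cook's actual construction does use inverse limits and hereditary indecomposability, but the essential ingredients are different from anything in your sketch: one needs a hereditarily indecomposable continuum admitting a monotone open map onto an arc whose (nondegenerate, hereditarily indecomposable) fibers form an uncountable family that is pairwise \emph{incomparable} (no nonconstant maps between distinct fibers), together with the fact that continuous surjections onto hereditarily indecomposable continua are weakly confluent; a counting argument then shows a nonconstant map can interact nontrivially with only countably many fibers, and iterating this over an inverse sequence of such (non-chainable, one-dimensional) continua produces the rigidity. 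If you want to prove the Fact rather than cite it, that is the machinery you would need to develop.
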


A construction of the Cook continuum can be also found  in \cite[p. 319--341]{PultrTrnkova}.

\begin{note}\label{CompCont}
Below we present a simple construction of a continuous reduction of  compacta  to continua.

Let $C\subseteq Q$ be a Cook continuum. Fix a point $c\in C$.
Let $Q^\prime= Q\times C/ Q\times\{c\}$  be the quotient space of $Q\times C$ where $ Q\times\{c\}$ is shrunk to a point and denote by $q\colon Q\times C\to Q^\prime$ the corresponding quotient mapping. 
 To every $K\in 2^Q$ assign $\varphi(K)=q(K\times C)$.
Hence, $\varphi(K)$ can be viewed as a cone over $K$  with the Cook continuum instead of an arc  as its fiber.
Let us verify that $\varphi\colon 2^Q\to 2^{Q^\prime}$ is a reduction of the homeomorphism relation of compacta to the homeomorphism relation of continua.
It is obvious that if $K$ and $L$ are homeomorphic, then $\varphi(K)$ and $\varphi(L)$ are homeomorphic.
On the other hand,  assume that $h\colon\varphi(K)\to \varphi(L)$ is a homeomorphism. Let  $q(x, a)\in \varphi(K)$ and  $h(q(x, a))=q(y,b)\in q(L\times C)$. Suppose  that $a\neq b$.
Denote by $i_x\colon C\to q(\{x\}\times C)$ the embedding given by $i_x(d)=q(x, d)$.
Let $\tilde\pi_2\colon Q^\prime\to C$ be such that $\tilde \pi_2\circ q=\pi_2$, where $\pi_2$ is the projection of $ Q\times C$  onto $C$.
Consider the mapping $\tilde \pi_2\circ h\circ i_x\colon C\to C$ which maps the point $a$ to $b$. Since these points are distinct, the preceding mapping needs to be a constant map by the properties of the Cook continuum mentioned in Fact~\ref{Cook}.
It follows that $h$ maps the set $i_x(C)$ into the set $q(L\times\{b\})$. In particular, the point $q(x, c)$ is mapped to $q(x, b)$.
Since $q(x, c)=q(x^\prime, c)$ for every $x^\prime \in K$, it follows analogously that 
 $h$ maps every set $i_{x^\prime}(C)$ into the set $q(L\times\{b\})$. Thus,  $h$ is not an onto mapping, a contradiction. Consequently, $h$  maps $q(K\times\{a\})$ onto  $q(L\times\{a\})$ for every $a\in C$. Hence, $K$ and $L$ are homeomorphic.
It is also easy to see that $\varphi$ is continuous.
\end{note}

\begin{note}
There are more simple ways  of obtaining such a reduction. Let us sketch the one which  avoids a Cook continuum, but uses a nondegenerate arc-less continuum $B$.   Fix a point $b\in B$.  To a compactum $X$ assign its cone, $cone(X)=X\times I/X\times\{1\}$,  and attach $X\times B$ to  $X\times\{0\}\subseteq cone(X)$ along $X\times\{b\}$.  Thus obtained continuum $\varphi(X)$ has only one arc component with nonempty interior (namely the cone) and its boundary is $X\times\{0\}$. It follows easily that $\varphi$ is a continuous reduction of all compacta to continua.
\end{note}

\begin{proposition}
Neither arc-like continua nor hereditarily decomposable continua are  classifiable by countable structures.
\end{proposition}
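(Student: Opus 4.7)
The plan is to Borel-reduce the equivalence relation $I^\N/c_0$, which is not classifiable by countable structures by Lemma~\ref{turb}, to the homeomorphism relation on each of the two classes.

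For hereditarily decomposable continua, I would adapt the planar construction from the proof of Theorem~\ref{rimfinitecompacta} so as to produce a connected space. For $x\in I^\N$, define $\Psi(x)$ by adjoining to $\Phi(x)$, for every $n\in\N$, the vertical arc joining $(x_n,2^{-n})$ (the vertex of the pendant $(2n+2)$-od) to its projection $(x_n,0)\in J$. The resulting planar continuum is a union of countably many arcs glued at a set of branch points that is locally finite away from $J$, with the only accumulation onto $J$. Every subcontinuum of $\Psi(x)$ is therefore either a finite graph or a finite graph with a sub-arc of $J$ adjoined as a compactifying limit, and in either case is decomposable; hence $\Psi(x)$ is hereditarily decomposable. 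A small parity adjustment of the marker-ods (e.g.\ replacing $T_{2n+2}$ by $T_{2n+3}$) keeps the pendant-vertex orders disjoint from the orders of the vertices of $L$, so that the proof of Theorem~\ref{rimfinitecompacta} applies verbatim to show $\Psi(x)\cong\Psi(y)$ if and only if $x-y\in c_0$.

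For arc-like continua, triod markers are unavailable since arc-like continua are atriodic, and the construction must be correspondingly more delicate. The plan is to use a base arc-like spine together with an infinite sequence of pairwise non-homeomorphic arc-like marker continua $M_n$ (for instance, suitable Knaster continua, or compactifications of a half-line by pairwise non-homeomorphic arc-like remainders), assembled along the spine in a manner that preserves arc-likeness. Because any attachment at an interior point of an arc creates a triod, the assembly must either identify each $M_n$ with the spine only at an endpoint of a free sub-arc, or be realized as an inverse limit of arcs whose bonding maps encode the sequence $(x_n)$. Once this is done, distinguishability of the markers (each $M_n$ appears only once in the assembly and they are pairwise non-homeomorphic), combined with uniform continuity of the homeomorphism, recovers the sequence $(x_n)$ modulo $c_0$.

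The main obstacle is the arc-like case: one needs simultaneously an infinite family of arc-like markers that remains pairwise distinguishable after assembly, and an assembly mechanism that creates no triod anywhere, so that the whole continuum stays arc-like. The tension between these two requirements---rich enough structure to code $I^\N/c_0$, yet no branching at all---is the technical heart of the argument, and is the reason the proof must diverge from the simple triod-based construction that sufficed for rim-finite compacta and hereditarily decomposable continua.
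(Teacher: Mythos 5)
Your reduction for the hereditarily decomposable half is workable: connecting each pendant od to $J$ by a vertical arc yields a rim-finite (hence hereditarily locally connected, hence hereditarily decomposable) planar continuum, and after the parity adjustment the order-of-point argument from Theorem~\ref{rimfinitecompacta} goes through, though you should note two small points: your classification of the subcontinua of $\Psi(x)$ as ``finite graphs, possibly with a sub-arc of $J$ adjoined'' is false (e.g.\ $\Psi(x)$ itself is a subcontinuum), so justify hereditary decomposability via rim-finiteness/hereditary local connectedness instead; and when $x_n$ coincides with some $q_m$ the order of that point of $J$ changes parity, so you must check (or arrange) that no two points of order $\geq 4$ share the same order, otherwise the ``unique point of its order'' argument needs a supplementary separation of the two candidates.

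The genuine gap is the arc-like half, which you explicitly leave unresolved: you propose pairwise non-homeomorphic arc-like markers $M_n$ attached along a spine, but as described this neither preserves arc-likeness in any verified way (an infinite assembly of chainable pieces glued at points need not be chainable unless the gluing pattern is controlled) nor, more importantly, encodes the real parameters $x_n$ at all --- distinguishable markers can only rigidify positions, while the reduction of $I^\N/c_0$ requires that the \emph{values} $x_n$ be recoverable up to a vanishing perturbation. The paper resolves exactly this tension with a single construction serving both classes at once: a planar continuum consisting of an arc $I$, a ray $L$ limiting onto $I$ from one side (which makes $I$ rigid under homeomorphisms), and a second ray $R$ limiting onto $I$ from the other side into which small two-sided $\sin(1/x)$-curves are inserted near points $A_1,A_2,\dots$ whose positions encode the sequence $(x_n)$. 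Since the markers are realized by limit (approximation) structure rather than by branch points, the space contains no triods, is chainable, and is hereditarily decomposable, so one reduction of $I^\N/c_0$ settles both statements simultaneously; the heights of the insertion points are recovered from any homeomorphism by uniform continuity, exactly as in your Theorem~\ref{rimfinitecompacta} argument. Without some such idea replacing branch-point markers by limit-type markers, your proposal does not prove the arc-like case.
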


\begin{proof}
Similarly to the proof of Theorem~\ref{rimfinitecompacta} it suffices to prove that $I^\N/c_0$ is Borel reducible to arc-like hereditarily decomposable continua.  We describe the idea just very informally.
For a sequence $(x_n)\in I^{\mathbb N}$,  consider a continuum in the plane consisting of an arc $I$ with a ray $L$  approximating $I$ from the left which makes the arc rigid for self-homeomorphisms. From the other side take another ray $R$ approximating $I$ with small two-sided $\sin(1/x)$- curves inserted into small neighborhoods of the points $A_1, A_2, \dots$ which  encode  sequence $(x_n)$. See the figure below.
\end{proof}

\begin{tikzpicture}[line cap=round,line join=round,>=triangle 45,x=0.7cm,y=1.0cm] 
\draw [color=black,, xstep=1.0cm,ystep=1.0cm] (0,0)  (0,0);
\clip(-9.0,0) rectangle (15.0,5.5);
\draw [line width=1.pt] (0.,0.)-- (0.,5.);
\draw [line width=1.pt] (-9.,4.)-- (-8.,0.);
\draw [line width=1.pt] (-8.,0.)-- (-7.,2.);
\draw [line width=1.pt] (-7.,2.)-- (-6.,1.);
\draw [line width=1.pt] (-6.,1.)-- (-5.,5.);
\draw [line width=1.pt] (-5.,5.)-- (-4.,1.);
\draw [line width=1.pt] (-4.,1.)-- (-3.,4.);
\draw [line width=1.pt] (-3.,4.)-- (-2.,0.);
\draw [line width=1.pt] (-2.,0.)-- (-1.,2.);
\draw [line width=1.pt] (8.,5.)-- (6.,0.);
\draw [line width=1.pt] (6.,0.)-- (4.,5.);
\draw [line width=1.pt] (4.,5.)-- (3.,0.);
\draw [line width=1.pt] (3.,0.)-- (2.,5.);
\draw [line width=1.pt] (2.,5.)-- (1.,0.);
\draw [line width=1.0pt,dash pattern=on 4pt off 4pt] (0.,4.)-- (8.4,4);
\draw [line width=1.0pt,dash pattern=on 4pt off 4pt] (0.,1.)-- (6.4,1);
\draw [line width=1.0pt,dash pattern=on 4pt off 4pt] (0.,3.)-- (4.8,3);
\draw [line width=1.0pt,dash pattern=on 4pt off 4pt] (8.4,4) circle (0.6);
\draw [line width=1.0pt,dash pattern=on 4pt off 4pt] (6.4,1) circle (0.5cm);
\draw [line width=1.0pt,dash pattern=on 4pt off 4pt] (4.8,3) circle (0.5cm);
\draw [line width=1.pt] (7.9,3.7)-- (8.9,4.3);
\draw [line width=1.pt] (8.,5.)-- (8.2,4.5);
\draw [line width=1.pt] (8.6,3.4)-- (10.,0.);
\draw [line width=1.pt] (8.6,3.4)-- (8.9,3.8);
\draw [line width=1.pt] (8.9,3.8)-- (8.1,3.5);
\draw [line width=1.pt] (8.1,3.5)-- (8.9,4.1);
\draw [line width=1.pt] (8.2,4.5)-- (7.9,4.2);
\draw [line width=1.pt] (7.9,4.2)-- (8.7,4.4);
\draw [line width=1.pt] (8.7,4.4)-- (7.9,3.9);
\draw[color=black] (-0.4,4.5) node {$I$};
\draw[color=black] (-8.1,2.0) node {$L$};
\draw[color=black] (0.4,4.22) node {$x_1$};
\draw[color=black] (9.3,4.2) node {$A_1$};
\draw[color=black] (0.4,1.2) node {$x_2$};
\draw[color=black] (6.8,1.0) node {$A_2$};
\draw[color=black] (0.4,3.2) node {$x_3$};
\draw[color=black] (5,3.2) node {$A_3$};
\draw[color=black] (8.8,2) node {$R$};
\end{tikzpicture}

\begin{note} Dendrites can be naturally coded either as a subset of the hyperspace of the Hilbert cube or as the hyperspace of all subcontinua of the universal dendrite. Fortunately both these codings are equivalent in the sense that the corresponding homeomorphism equivalence relations are Borel bireducible one with the other. This follows actually by the proof of \cite{CamerloDarjiMarcone}. Alternatively,  one can Borel reduce countable linear orders to dendrites in the universal dendrite. Since countable linear orders are Borel bireducible with the universal $S_\infty$ orbit equivalence relation we obtain the same for dendrites in the universal dendrite.
\end{note}

\begin{question}
 Are there some Borel reductions decreasing the topological dimension?
\end{question}

\begin{question}
Is the universal orbit equivalence Borel reducible to rim-finite compacta?
Is it reducible to compacta of order at most two?
\end{question}

\bibliographystyle{alpha}

\begin{thebibliography}{{Cie}17}

\bibitem[Bes88]{Bestvina}
Mladen Bestvina.
\newblock Characterizing {$k$}-dimensional universal {M}enger compacta.
\newblock {\em Mem. Amer. Math. Soc.}, 71(380):vi+110, 1988.

\bibitem[CDM05]{CamerloDarjiMarcone}
Riccardo Camerlo, Udayan~B. Darji, and Alberto Marcone.
\newblock Classification problems in continuum theory.
\newblock {\em Trans. Amer. Math. Soc.}, 357(11):4301--4328, 2005.

\bibitem[CG01]{CamerloGao}
Riccardo Camerlo and Su~Gao.
\newblock The completeness of the isomorphism relation for countable {B}oolean
  algebras.
\newblock {\em Trans. Amer. Math. Soc.}, 353(2):491--518, 2001.

\bibitem[CG17]{ChangGao}
Cheng Chang and Su~Gao.
\newblock The complexity of the classification problem of continua.
\newblock {\em Proc. Amer. Math. Soc.}, 145(3):1329--1342, 2017.

\bibitem[{Cie}17]{Ciesla}
T.~{Cie{\'s}la}.
\newblock {Completeness of the homeomorphism relation of locally connected
  continua}.
\newblock {\em ArXiv e-prints}, December 2017.

\bibitem[Coo67]{Cook}
H.~Cook.
\newblock Continua which admit only the identity mapping onto non-degenerate
  subcontinua.
\newblock {\em Fund. Math.}, 60:241--249, 1967.

\bibitem[Cur86]{Curtis}
D.~W. Curtis.
\newblock Hyperspaces of finite subsets as boundary sets.
\newblock {\em Topology Appl.}, 22(1):97--107, 1986.

\bibitem[Eng89]{Engelking}
Ryszard Engelking.
\newblock {\em General topology}, volume~6 of {\em Sigma Series in Pure
  Mathematics}.
\newblock Heldermann Verlag, Berlin, second edition, 1989.
\newblock Translated from the Polish by the author.

\bibitem[FLR09]{FerencziLouveauRosendal}
Valentin Ferenczi, Alain Louveau, and Christian Rosendal.
\newblock The complexity of classifying separable {B}anach spaces up to
  isomorphism.
\newblock {\em J. Lond. Math. Soc. (2)}, 79(2):323--345, 2009.

\bibitem[Gao09]{Gao}
Su~Gao.
\newblock {\em Invariant descriptive set theory}, volume 293 of {\em Pure and
  Applied Mathematics (Boca Raton)}.
\newblock CRC Press, Boca Raton, FL, 2009.

\bibitem[GK03]{GaoKechris}
Su~Gao and Alexander~S. Kechris.
\newblock On the classification of {P}olish metric spaces up to isometry.
\newblock {\em Mem. Amer. Math. Soc.}, 161(766):viii+78, 2003.

\bibitem[GvM93]{GvM}
Helma Gladdines and Jan van Mill.
\newblock Hyperspaces of {P}eano continua of {E}uclidean spaces.
\newblock {\em Fund. Math.}, 142(2):173--188, 1993.

\bibitem[Hjo00]{Hjorth}
Greg Hjorth.
\newblock {\em Classification and orbit equivalence relations}, volume~75 of
  {\em Mathematical Surveys and Monographs}.
\newblock American Mathematical Society, Providence, RI, 2000.

\bibitem[Kan08]{Kanovei}
Vladimir Kanovei.
\newblock {\em Borel equivalence relations}, volume~44 of {\em University
  Lecture Series}.
\newblock American Mathematical Society, Providence, RI, 2008.
\newblock Structure and classification.

\bibitem[Kec95]{Kechris}
Alexander~S. Kechris.
\newblock {\em Classical descriptive set theory}, volume 156 of {\em Graduate
  Texts in Mathematics}.
\newblock Springer-Verlag, New York, 1995.

\bibitem[Kra76]{Krasinkiewicz}
J.~Krasinkiewicz.
\newblock On a method of constructing {ANR}-sets. {A}n application of inverse
  limits.
\newblock {\em Fund. Math.}, 92(2):95--112, 1976.

\bibitem[KRN65]{KRN}
K.~Kuratowski and C.~Ryll-Nardzewski.
\newblock A general theorem on selectors.
\newblock {\em Bull. Acad. Polon. Sci. S\'er. Sci. Math. Astronom. Phys.},
  13:397--403, 1965.

\bibitem[Kur55]{Kuratowski1}
K.~Kuratowski.
\newblock Sur l'espace des fonctions partielles.
\newblock {\em Ann. Mat. Pura Appl. (4)}, 40:61--67, 1955.

\bibitem[Kur56]{Kuratowski2}
K.~Kuratowski.
\newblock Sur une m\'ethode de m\'etrisation compl\`ete de certains espaces
  d'ensembles compact.
\newblock {\em Fund. Math.}, 43:114--138, 1956.

\bibitem[Mel07]{Melleray}
Julien Melleray.
\newblock Computing the complexity of the relation of isometry between
  separable {B}anach spaces.
\newblock {\em MLQ Math. Log. Q.}, 53(2):128--131, 2007.

\bibitem[MR17]{MottoRos}
Luca Motto~Ros.
\newblock Can we classify complete metric spaces up to isometry?
\newblock {\em Boll. Unione Mat. Ital.}, 10(3):369--410, 2017.

\bibitem[Nad92]{Nadler}
Sam~B. Nadler, Jr.
\newblock {\em Continuum theory}, volume 158 of {\em Monographs and Textbooks
  in Pure and Applied Mathematics}.
\newblock Marcel Dekker, Inc., New York, 1992.
\newblock An introduction.

\bibitem[PT80]{PultrTrnkova}
Ale{\v{s}} Pultr and V\v{e}ra Trnkov\'a.
\newblock {\em Combinatorial, algebraic and topological representations of
  groups, semigroups and categories}, volume~22 of {\em North-Holland
  Mathematical Library}.
\newblock North-Holland Publishing Co., Amsterdam-New York, 1980.

\bibitem[RN65]{Ryll-Nardzewski}
C.~Ryll-Nardzewski.
\newblock On a {F}reedman's problem.
\newblock {\em Fund. Math.}, 57:273--274, 1965.

\bibitem[vM01]{vanMill}
Jan van Mill.
\newblock {\em The infinite-dimensional topology of function spaces}, volume~64
  of {\em North-Holland Mathematical Library}.
\newblock North-Holland Publishing Co., Amsterdam, 2001.

\bibitem[Why42]{Whyburn}
Gordon~Thomas Whyburn.
\newblock {\em Analytic {T}opology}.
\newblock American Mathematical Society Colloquium Publications, v. 28.
  American Mathematical Society, New York, 1942.

\bibitem[Zie16]{Zielinski}
Joseph Zielinski.
\newblock The complexity of the homeomorphism relation between compact metric
  spaces.
\newblock {\em Adv. Math.}, 291:635--645, 2016.

\end{thebibliography}

\end{document}